\let\mathcal=\mathscr
\newtheorem{thm}{Theorem}[section]
\newtheorem{corol}[thm]{Corollary}
\newtheorem{lemma}[thm]{Lemma}
\newtheorem{prop}[thm]{Proposition}
\newtheorem{defin}[thm]{Definition}
\theoremstyle{remark}
\newtheorem{rem}[thm]{Remark}
\newtheorem{ex}[thm]{Example}
\newenvironment{remark}{\begin{rem}\rm}{\qee\end{rem}}
\newcommand{\cA}{{\mathcal A}}
\newcommand{\cC}{{\mathcal C}}
\newcommand{\calD}{{\mathcal D}}
\newcommand{\cE}{{\mathcal E}}
\newcommand{\cF}{{\mathcal F}}
\newcommand{\cO}{{\mathcal O}}
\newcommand{\calL}{{\mathcal L}}
\newcommand{\cM}{{\mathcal M}}
\newcommand{\cI}{{\mathcal I}}
\newcommand{\cJ}{{\mathcal J}}
\newcommand{\cK}{{\mathcal K}}
\newcommand{\cQ}{{\mathcal Q}}
\newcommand{\cS}{{\mathcal S}}
\newcommand{\cT}{{\mathcal T}}
\newcommand{\cZ}{{\mathcal Z}}
\newcommand{\End}{\operatorname{End}}
\newcommand{\Ext}{\operatorname{Ext}}
\newcommand{\Lie}{\mathscr{L}\!\mathit{ie}}
\newcommand{\Kernel}{\operatorname{Ker}}
\newcommand{\Hom}{\operatorname{Hom}}
\newcommand{\gr}{\operatorname{gr}}
\newcommand{\Rep}{\operatorname{Rep}}
\newcommand{\N}{{\mathbb N}}
\newcommand{\qee}{\mbox{\hspace{0.2mm}}\hfill$\triangle$}
\newcommand{\Der}{\operatorname{Der}}
\newcommand{\Dder}{\operatorname{\mathcal{D}\mspace{-2mu}\mathit{er}}}
\newcommand{\cHom}{\operatorname{{\mathcal{H}\mspace{-2mu}\mathit{om}}}}
\newcommand{\Out}{\operatorname{{\cO}\!\mathit{ut}}}
\newcommand{\Vect}{\mathbf{Vect}_\Bbbk}
\begin{document}
%
%
%
%
%

 \begin{center}
{\Large\bf Lie algebroid cohomology and \\[5pt] Lie algebroid extensions } \\[20pt]
{\sc E. Aldrovandi,$^\P$ U. Bruzzo$^{\S\star\dag}$ and V. Rubtsov$^{\ddag\sharp}$} \\[10pt]
$^\P$Department of Mathematics, Florida State University,  \\ 1017 Academic Way, Tallahassee FL  32304, USA  \\[5pt]
$^\S$Scuola Internazionale Superiore di Studi Avanzati, \\ Via Bonomea 265, 34136 Trieste, Italy  \\[5pt]
 $^\star$Istituto Nazionale di Fisica Nucleare, Sezione di Trieste\\[5pt]
 $\dag$ Arnold-Regge Institute, Torino \\[5pt]
$^\ddag$ Universit\'e d'Angers, D\'epartement de Math\'ematiques, \\ UFR Sciences,   LAREMA, UMR 6093 du CNRS,
2 bd.~Lavoisier, \\ 49045 Angers Cedex 01, France  \\[5pt]
$^\sharp$ ITEP Theoretical Division, 25 Bol.~Tcheremushkinskaya, \\  117259, Moscow, Russia 
\end{center} 

\bigskip {\small
\begin{quote}{\sc Abstract.} We consider the extension problem for Lie algebroids over schemes over a field. Given a 
 locally free Lie algebroid  $\cQ$ over a scheme $X$, and  a sheaf of  finitely generated Lie $\cO_X$-algebras  $\calL$,
 we determine the obstruction to the existence of    extensions 
$0 \to \calL \to \cE \to \cQ \to 0 $, and classify the extensions in terms of a suitable Lie algebroid hypercohomology group.  In the preliminary sections we study free Lie algebroids and recall some basic facts about Lie algebroid hypercohomology.
\end{quote}
}

\bigskip
 
 \let\svthefootnote\thefootnote
\let\thefootnote\relax\footnote{
\hskip-\parindent {\em Date: } \today  \\
{\em 2000 Mathematics Subject Classification:} 14F40, 18G40, 32L10,   55N35, 55T05 \\ 
{\em Keywords:} Lie algebroid extensions, Lie algebroid cohomology, free Lie algebroids \\
Email: {\tt  ealdrov@math.fsu.edu, bruzzo@sissa.it, Volodya.Roubtsov@univ-angers.fr}
}
\addtocounter{footnote}{-1}\let\thefootnote\svthefootnote
\newpage
\setcounter{tocdepth}{1}
{\small\tableofcontents}

\section{Introduction}

Let $X$ be a noetherian separated scheme over a field $\Bbbk$.  Let $\cQ$ be a locally free Lie algebroid  on $X$, and  $\calL$ a sheaf of  finitely generated Lie $\cO_X$-algebras   (definitions will be given in Section \ref{cohom}).
An exact sequence of Lie algebroids
\begin{equation}\label{extalg} 0 \to \calL \to \cE \to \cQ \to 0 \end{equation}
is called an extension of $\cQ$ by $\calL$. Any such extension defines a morphism   $\alpha\colon\cQ\to\Out(\calL)$,
where $\Out(\calL)$ is the Lie algebroid of outer derivations of $\calL$, by letting
$$ \alpha(x) (y) = \{x',y\}_\cE$$
where $x'$ is any counterimage of $x$ in $\cE$. 
 It also induces a representation
of $\cQ$ on the centre $Z(\calL)$ of $\calL$, i.e., a morphism $\alpha\colon\cQ\to\Dder(Z(\calL))$.

Any two extensions $\cE_1$, $\cE_2$ are considered to be equivalent if there is a morphism $\cE_1\to \cE_2$
such that the diagram 
\begin{equation} \label{equiv}
\xymatrix{
0 \ar[r] & \calL  \ar[r]  \ar@{=}[d]  & \cE _1 \ar[r]  \ar[d] &\cQ  \ar[r]  \ar@{=}[d] & 0 \\
0  \ar[r]  & \calL \ar[r]  &\cE_2 \ar[r] & \cQ  \ar[r] & 0
}
\end{equation}
commutes.

 In this paper we study the problem of finding extensions of Lie algebroids  as in \eqref{extalg}
 such that the induced $\cQ$-module structure of $Z(\calL)$ coincides with that defined by a given   $\alpha$.  
This problem was already studied in \cite{BMRT} by realizing the hypercohomology groups of a Lie algebroid
in terms of \v Cech complexes. Here, following \cite{Ugo-derived}, we adopt an intrinsic approach.

If $\calL$ is abelian, the problem is unobstructed, as $\alpha$ defines an action of $\cQ$ on $\calL$, and  
one can   define the semidirect product Lie algebroid
$$ \cE = \calL \rtimes_\alpha \cQ, $$
where $\cE=\calL\oplus\cQ$ as $\cO_X$-modules,
with bracket
$$\{ (\ell,x),\,(\ell',x')\} = (\alpha(x)(\ell')-\alpha(x')(\ell), \{x,x'\})$$
and anchor $a\colon\cE\to\Dder(\cA)$ given by the anchor $b$ of $\cQ$, i.e., $a((\ell,x))=b(x).$
On the other hand, if $\calL$ is not abelian, $\alpha$ does not define an action of $\cQ$ on $\calL$, and the problem of finding an extension of $\cQ$ by $\calL$ is obstructed by a class $\mathbf{ob}(\alpha)$ in the group
$$ \mathbb H^3 (\cQ;Z(\calL))^{(1)} = \mathbb H^3(X,\sigma^{\ge 1}Z(\calL)\otimes\Lambda^\bullet \cQ^\ast),$$
i.e., the third hypercohomology group  of a ``sharp'' truncation of the Chevalley-Eilenberg-de Rham complex of $\cQ$ with coefficients in $Z(\calL)$.

When the obstruction is zero (which, as we have seen, is always the case when $\calL$ is abelian), the equivalence classes
of extensions of $\cQ$ by $\calL$, inducing on $Z(\calL)$ the $\cQ$-action given by $\alpha$, are classified by the group
$$ \mathbb H^2 (\cQ;Z(\calL))^{(1)} = \mathbb H^2(X,\sigma^{\ge 1}\calL\otimes\Lambda^\bullet \cQ^\ast).$$
So we have the following theorem.

\begin{thm} Given a locally free Lie algebroid $\cQ$,   a sheaf $\calL$  of finitely generated Lie $\cO_X$-algebras, and a morphism $\alpha\colon\cQ\to \Out(\calL)$, the problem of finding an extension of $\cQ$ by $\calL$ 
 inducing on $Z(\calL)$ the $\cQ$-action given by $\alpha$
is obstructed by a class $\mathbf{ob}(\alpha)\in  \mathbb H^3 (\cQ;Z(\calL))^{(1)}$. 
If $\mathbf{ob}(\alpha)=0$, the space of equivalence classes
of extensions is a torsor over the group $\mathbb H^2 (\cQ;Z(\calL))^{(1)}$.
\label{class}
\end{thm}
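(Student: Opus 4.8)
The plan is to reduce the extension problem for Lie algebroids to a problem in the hypercohomology of the Chevalley--Eilenberg--de Rham complex of $\cQ$, following the classical strategy of Hochschild--Serre and Giraud for group (and Lie algebra) extensions, but carried out in the sheaf-theoretic setting where local choices need not glue. First I would observe that a morphism $\alpha\colon\cQ\to\Out(\calL)$ produces, locally on $X$, an honest lift $\tilde\alpha\colon\cQ\to\Dder(\calL)$: on a suitable open cover $\{U_i\}$ one can choose a section of the projection $\Dder(\calL)\to\Out(\calL)$ compatible with the anchor, because $\cQ$ is locally free and $\calL$ is locally finitely generated. Two such local lifts $\tilde\alpha_i$, $\tilde\alpha_j$ differ on $U_{ij}$ by an inner derivation, i.e. by an element $\phi_{ij}\in\calL(U_{ij})\otimes\cQ^\ast$, and the failure of $\tilde\alpha_i$ to be a Lie algebroid morphism is measured by a ``curvature'' $\omega_i\in\calL(U_i)\otimes\Lambda^2\cQ^\ast$. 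Assembling $\{\omega_i\}$ and $\{\phi_{ij}\}$ together with the relations they satisfy on triple overlaps yields, after projecting to the centre $Z(\calL)$ (the indeterminacies in the choices being exactly inner derivations, which act trivially on $Z(\calL)$), a well-defined hypercocycle representing a class in $\mathbb H^3\bigl(X,\sigma^{\ge 1}Z(\calL)\otimes\Lambda^\bullet\cQ^\ast\bigr)=\mathbb H^3(\cQ;Z(\calL))^{(1)}$. I would call this class $\mathbf{ob}(\alpha)$ and check it is independent of the cover and of the local lifts, the ``sharp'' truncation $\sigma^{\ge 1}$ appearing precisely because the datum $\phi_{ij}$ and $\omega_i$ are $\cQ$-forms of positive degree.

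Next I would prove that $\mathbf{ob}(\alpha)=0$ if and only if an extension inducing $\alpha$ on $Z(\calL)$ exists. For the ``only if'' direction: given an extension $0\to\calL\to\cE\to\cQ\to0$, choose local $\cO_X$-linear, anchor-compatible splittings $s_i\colon\cQ|_{U_i}\to\cE|_{U_i}$; conjugation by $s_i$ gives a local lift of $\alpha$, the defect $\{s_i(x),s_i(y)\}_\cE-s_i\{x,y\}$ lands in $\calL$ and produces the $\omega_i$, and $s_i-s_j$ produces the $\phi_{ij}$; the Jacobi identity in $\cE$ forces the assembled cochain to be a coboundary, so $\mathbf{ob}(\alpha)=0$. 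For the ``if'' direction one runs this in reverse: a vanishing of the class lets one adjust the local lifts and transition data so that the would-be bracket $\{(\ell,x),(\ell',x')\}_i=\bigl(\tilde\alpha_i(x)\ell'-\tilde\alpha_i(x')\ell+[\ell,\ell']+\omega_i(x,y),\{x,y\}\bigr)$ on $\calL|_{U_i}\oplus\cQ|_{U_i}$ satisfies Jacobi and the local pieces glue over the $U_{ij}$ via the $\phi_{ij}$ into a global Lie algebroid $\cE$; the anchor is inherited from $b$.

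For the classification statement, assuming $\mathbf{ob}(\alpha)=0$, I would fix one extension $\cE_0$ as a basepoint and show that the set of equivalence classes is a torsor over $\mathbb H^2(\cQ;Z(\calL))^{(1)}$. Given any other extension $\cE$ inducing $\alpha$, I compare local splittings of $\cE$ and $\cE_0$: the differences of the bracket-defects $\omega_i-\omega_i^0$ and of the transition terms $\phi_{ij}-\phi_{ij}^0$ now take values in $Z(\calL)$ (because both extensions induce the \emph{same} $\alpha$, the outer-derivation parts cancel), and the cocycle conditions say exactly that this pair is a degree-$2$ hypercocycle of $\sigma^{\ge1}Z(\calL)\otimes\Lambda^\bullet\cQ^\ast$; changing the local splittings changes it by a coboundary, and an equivalence $\cE\cong\cE'$ in the sense of diagram \eqref{equiv} is precisely a global change of splitting, so the class in $\mathbb H^2(\cQ;Z(\calL))^{(1)}$ is a complete invariant of the equivalence class. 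Conversely, twisting the bracket of $\cE_0$ by a $2$-cocycle valued in $Z(\calL)$ yields a new extension, giving a simply transitive action; one checks $\cE_0$ itself need not be canonical, which is why we get a torsor rather than a group. The main obstacle I anticipate is the bookkeeping in the sheaf-theoretic (rather than global-section) setting: making the cochains $\{\omega_i\},\{\phi_{ij}\}$ into genuine hypercocycles for the total complex of the \v{C}ech--Chevalley--Eilenberg double complex, tracking signs through the truncation $\sigma^{\ge1}$, and verifying that all constructions are independent of the auxiliary cover --- this is where the ``intrinsic approach'' of \cite{Ugo-derived} must be invoked to keep the argument clean, and where the proof in \cite{BMRT} via explicit \v{C}ech complexes did the heavy lifting.
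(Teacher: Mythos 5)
Your proposal is correct in outline, but it takes the \v Cech route throughout, which is precisely the approach of \cite{BMRT} that the paper deliberately sets aside for the obstruction and existence parts in favour of an intrinsic construction. Concretely: the paper builds $\mathbf{ob}(\alpha)$ by presenting $\cQ$ as a quotient of a free Lie algebroid, $0\to\cT\to\cF\to\cQ\to 0$, using the induced locally free $\mathfrak U(\cQ)$-resolution of $\cO_X$ (the modules $\widetilde\cK^{\,i},\widetilde\cJ^{\,i}$) together with the identification $\Ext^i_{\mathfrak U(\cQ)}(\cJ,-)\simeq\mathbb H^{i+1}(\cQ;-)^{(1)}$, and writes a single global cocycle $o(yx)=\beta(\{x,y\})-\tilde\alpha(x)(\beta(y))$ with values in $Z(\calL)$; the existence direction of Theorem \ref{obs} is then a one-stroke global construction, $\cE=\calL\rtimes\cF/\mathcal H$, with no gluing of local brackets. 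For the classification the paper does use local splittings much as you do (Propositions \ref{prop1} and \ref{prop2}), but organizes the comparison through the Baer-type product $\cE\star\calD$ so as to reduce to the abelian case, which is then handled by van Osdol's general classification by $R^1\Der(\cQ,-)$ together with Proposition \ref{Der&Trunc}. What your approach buys is concreteness; what it costs is exactly the bookkeeping you flag at the end --- verifying that the assembled \v Cech--Chevalley--Eilenberg data are genuine hypercocycles for the truncated total complex, independent of cover and lifts --- which is the heavy lifting done in \cite{BMRT}. What the paper's approach buys is that the obstruction is manifestly a class in a derived-functor group with no cover-dependence to check, at the price of the free-algebroid machinery of Section \ref{freealg}.

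One point in your sketch needs correction rather than mere elaboration: the degree-$3$ obstruction cocycle is not ``$\{\omega_i\}$ and $\{\phi_{ij}\}$ projected to the centre.'' Those are degree-$2$ cochains with values in $\calL$, and there is in general no projection $\calL\to Z(\calL)$ ($Z(\calL)$ is a submodule, not a quotient). The actual cocycle has components of \v Cech--CE bidegrees $(0,3)$, $(1,2)$, $(2,1)$, namely the \emph{defects} $d_{\alpha_i}\rho_i$ (where $\omega_i=\ad\rho_i$), the failure of $\rho_i-\rho_j$ to equal $d_{\alpha_i}\phi_{ij}$, and the failure of $\{\phi_{ij}\}$ to be a cocycle on triple overlaps; these defects land in $Z(\calL)$ because inner derivations act trivially there. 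With that repair your argument goes through.
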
 

\begin{remark} In the abelian case the  space of equivalence classes
of extensions is naturally identified with $\mathbb H^2 (\cQ;Z(\calL))^{(1)}$,
with the zero element of the latter space being identified with the semidirect product extension.
\end{remark}

The contents of this paper are as follows. In Section \ref{cohom} we review the fundamentals about Lie algebroid cohomology, stressing a few facts that will be needed later on in the paper. Since some arguments will involve the use of free Lie algebroids, in Section \ref{freealg} we develop their basic theory. In Section \ref{abelian} we briefly 
treat the abelian case; the classification problem in this case fits into a general theory developed by van Osdol \cite{vanosdol}.
In Section \ref{nonabelian} we treat the nonabelian case. We first construct the obstruction to the extension problem for Lie algebroids, and then, assuming that the obstruction vanishes, we reduce the classification theorem to the abelian case.

\smallskip {\noindent \bf Acknowledgements} Part of this research was  carried out   while  U.B~was visiting the Institute for Mathematical Sciences, National University of Singapore in 2016, and the Department of Mathematics of the University of Pennsylvania in 2017.  We thank G.~Powell and F.~Sorrentino for useful discussions. This research was partly supported by  INdAM-GNSAGA (also through its Visiting Professor Programme). U.B.~is a member of VBAC. V.R's research is partly supported by RFBR's grant 15-01-05990A. 

\bigskip
\section{Lie algebroids and their hypercohomology} \label{cohom}
In this section, basically following \cite{Rinehart63} and \cite{Ugo-derived}, 
we recall some basic facts about the cohomology of Lie-Rinehart algebras, and the hypercohomology
of Lie algebroids over schemes.

\subsection{Lie-Rinehart algebras}  \label{BenjiVolodya}  As Lie algebroids  are in a sense Lie-Rinehart algebras with coefficients, we start with some issues about Lie-Rinehart algebras.

Let $A$ be a finitely generated commutative, associative unital algebra over a field $\Bbbk$. A $(\Bbbk,A)$-Lie-Rinehart algebra is a pair $(L,a)$, where $L$ is an $A$-module equipped with a $\Bbbk$-linear Lie algebra bracket $\{\,,\,\}$, and $a\colon L \to \operatorname{Der}_\Bbbk(A)$ a representation of $L$ in $\operatorname{Der}_\Bbbk(A)$  (the anchor) that satisfies the Leibniz rule 
$$ \{s,ft\} = f\{s,t\} + a(s)(f)\,t $$
where   $s,t\in L$ and $f\in A$. 

We consider a useful class of  Lie-Rinehart algebras.
Let $\Bbbk$ be a field, and $A$ a commutative associative algebra over $\Bbbk$.
Let $V$ be a $\Bbbk$-vector space, and define
$$L=A \otimes_\Bbbk V.$$
Let $G^\bullet(L)$ be the graded algebra generated by $L$ over $A$, with
$A$ in degree 0 and $L$ in degree one. We have \cite{EnriRub}

\begin{prop} Lie-Rinehart algebra structures on $L$ are in a one-to-one correspondence
with degree -1, graded-symmetric $\Bbbk$-Lie brackets on 
$$G^0(L) \oplus G^1(L) =  A \oplus L $$
that satisfy the Leibniz rule
$$ [ \alpha s,s'] = \alpha [s,s'] + [\alpha,s'] s $$
for $\alpha\in A$, $s,s'\in L$.
\end{prop}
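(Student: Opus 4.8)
The statement is a dictionary between two kinds of algebraic structure on the same underlying data $A \oplus L$, so the plan is to write down the translation in each direction and check that the axioms correspond. The key observation is that a degree $-1$, graded-symmetric $\Bbbk$-bilinear bracket on the two-term graded object $G^0 \oplus G^1 = A \oplus L$ has only a few nonzero components for degree reasons: $[\,\cdot\,,\,\cdot\,]\colon G^1 \times G^1 \to G^1$, i.e.\ a $\Bbbk$-bilinear map $L \times L \to L$, and $[\,\cdot\,,\,\cdot\,]\colon G^0 \times G^1 \to G^0$ (equivalently $G^1 \times G^0 \to G^0$ by graded symmetry), i.e.\ a $\Bbbk$-bilinear map $L \times A \to A$; the $G^0 \times G^0 \to G^{-1} = 0$ component vanishes. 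So first I would unwind the grading conventions, note that ``degree $-1$'' forces exactly these components, and record that ``graded-symmetric'' on a degree $-1$ bracket means the $L\times L$ piece is antisymmetric (a sign $(-1)^{|s||s'|+1}=-1$ in degree $(1,1)$) while the $L\times A$ and $A\times L$ pieces agree up to the same sign bookkeeping. This identifies the $L\times L$ component with a candidate Lie bracket $\{\,,\,\}$ on $L$ and the $L\times A$ component with a candidate anchor action $a(s)(f):=[s,f]$.

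Next I would match the remaining axioms. The Leibniz rule $[\alpha s, s'] = \alpha[s,s'] + [\alpha,s']s$ stated in the proposition, restricted to the case $\alpha = f \in A$, $s,s' \in L$, reads $\{fs,s'\} = f\{s,s'\} - a(s')(f)\,s$, which after using antisymmetry is exactly the Lie--Rinehart Leibniz rule $\{s',fs\} = f\{s',s\} + a(s')(f)s$. The same Leibniz identity with $s' \in L$ replaced by an element of $A$ gives the statement that $a(s)$ is a $\Bbbk$-derivation of $A$: $[fg, s] $-type computations, or more directly $[s, fg] = $ (Leibniz applied in the other slot) $= f[s,g] + g[s,f]$. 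The graded Jacobi identity for $[\,,\,]$, written out on the various degree combinations, splits into: (a) on three degree-$1$ elements, the ordinary Jacobi identity for $\{\,,\,\}$ on $L$; (b) on two degree-$1$ and one degree-$0$ element, the compatibility saying $a\colon L \to \operatorname{Der}_\Bbbk(A)$ is a Lie algebra homomorphism, i.e.\ $a(\{s,s'\}) = [a(s),a(s')]$ (this is the derivation property of the adjoint action combined with Leibniz); (c) on one degree-$1$ and two degree-$0$ elements, an identity that is automatically satisfied because $A$ is commutative and $a(s)$ is a derivation. So the graded Lie axioms on $A\oplus L$ unpack precisely into the three defining clauses of a Lie--Rinehart algebra: $\{\,,\,\}$ is a $\Bbbk$-Lie bracket on $L$, $a$ is a representation in $\operatorname{Der}_\Bbbk(A)$, and the Leibniz rule holds.

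Conversely, given a Lie--Rinehart structure $(\{\,,\,\}, a)$ on $L$, I would define the bracket on $A \oplus L$ by $[s,s'] := \{s,s'\}$, $[s,f] := a(s)(f) =: -[f,s]$, $[f,g] := 0$, and verify graded symmetry (immediate from the definitions and antisymmetry of $\{\,,\,\}$) and the graded Jacobi identity by re-reading clauses (a)--(c) above in reverse; the Leibniz rule of the proposition is then the Lie--Rinehart Leibniz rule plus the derivation property. Finally I would observe that the two constructions are mutually inverse, since each recovers its input components tautologically, which gives the claimed bijection.

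The routine but slightly delicate part is clause (b): checking that the mixed Jacobi identity is genuinely equivalent to ``$a$ is a Lie algebra morphism'' rather than something weaker, and being careful that the signs in the graded-symmetric/graded-Jacobi conventions for a degree $-1$ bracket land correctly (a degree $-1$ bracket on a graded object is the same data as an ordinary graded Lie bracket after a degree shift, so all signs can be checked against that model). I expect no real obstacle beyond sign bookkeeping; the content is the degree-counting observation that pins down the possible components, after which everything is forced.
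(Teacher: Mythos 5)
Your proposal is correct in structure, and in fact the paper offers no proof of this proposition at all --- it is quoted from \cite{EnriRub}, with only the surrounding remarks (the vanishing of the bracket on $G^0\times G^0$ for degree reasons, the graded Jacobi identity implying the ordinary one on $L$, and the formula $a(x)(\alpha)=[\alpha,x]$ for the anchor) indicating the same component-by-component unpacking you carry out. Two small points deserve care. First, your sign formula $(-1)^{|s||s'|+1}$ gives $+1$ in bidegree $(1,1)$ as written; the antisymmetry of the $L\times L$ component comes out correctly only if you use the shifted degrees, as you yourself suggest at the end. Second, and more substantively: the step where you obtain the derivation property $a(s)(\alpha\beta)=\alpha\,a(s)(\beta)+\beta\,a(s)(\alpha)$ by ``replacing $s'$ by an element of $A$'' in the displayed Leibniz rule does not literally work, since that substitution only yields $[\alpha s,\beta]=\alpha[s,\beta]$ (i.e.\ $A$-linearity of the anchor), the term $[\alpha,\beta]s$ vanishing in degree $-1$. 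The derivation property is nevertheless a consequence of the stated axioms: compare the two expansions of $[(\alpha\beta)s,s']=[\alpha(\beta s),s']$ using the displayed Leibniz rule twice, which gives $[\alpha\beta,s']\,s=(\alpha[\beta,s']+\beta[\alpha,s'])\,s$ for all $s$, and conclude using that $L=A\otimes_\Bbbk V$ is a faithful $A$-module. With that repair your two constructions are indeed mutually inverse and the proof is complete.
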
 
Note that the bracket is required to satisfy a graded Jacobi identity, which implies the usual Jacobi identity for $L$. Moreover, the anchor of $L$ is given by the map
$$a\colon L\to \Der_\Bbbk(A),\qquad a(x)(\alpha) = [\alpha,x].$$

A useful class of examples is provided by taking $V=\mathfrak g$, where $\mathfrak g$ is a Lie
algebra over $\Bbbk$ equipped with a $\Bbbk$-Lie algebra homomorphism $a\colon \mathfrak g \to \Der_\Bbbk(A)$
(the Lie-Rinehart algebras obtained in this way are called {\em transformation Lie-Rinehart algebras} \cite{MK87}).  
The bracket on $G^0(L) \oplus G^1(L)$ is defined as
$$ [\alpha\otimes\xi,\beta] = \alpha\, a(\xi)(\beta)$$
$$[\alpha\otimes\xi,\beta\otimes \eta ] = 
\alpha\beta\, [\xi,\eta] + \alpha\,a(\xi)(\beta)\,\eta - \beta\,a(\eta)(\alpha)\,\xi$$
for $\alpha,\beta\in A$, $\xi,\eta\in\mathfrak g$. (Note that the bracket of two elements in $G^0(L)$ is always zero as the bracket
is supposed to have degree $-1$.) The anchor $a_L$ of $L$ is given by
$$a_L(\alpha\otimes \xi)(\beta) =\alpha\,a(\xi)(\beta).$$

\subsection{Lie algebroid cohomology} 
We consider now Lie algebroids.  All schemes will be noetherian.
Let $X$ be a  separated 
scheme over a field $\Bbbk$ (the same results hold in the holomorphic category). We shall denote by $\cO_X$ the sheaf of regular functions on $X$, by $\Bbbk_X$ the constant sheaf on $X$ with stalk $\Bbbk$, and by $\Theta_X$ the tangent sheaf of $X$ (the sheaf of derivations of the structure sheaf $\cO_X$), which is a sheaf of $\Bbbk_X$-Lie algebras.  
A   Lie algebroid $\cC$ on $X$ is a coherent $\cO_X$-module $\cC$ equipped with:
\begin{itemize}
\item a $\Bbbk$-linear Lie bracket defined on   sections of $ \cC$, satisfying the Jacobi identity;
\item a   morphism of $\cO_X$-modules $a\colon \cC \to\Theta_X$, 
called the {\em anchor} of $\cC$, which is also a morphism of sheaves of $\Bbbk$-Lie algebras.
\end{itemize}
The  Leibniz rule
\begin{equation}\label{leibniz} \{s,ft\} = f\{s,t\} +a(s)(f)\,t  \end{equation}
is required to hold 
for all sections $s,t$ of $\cC$ and $f$ of $\cO_X$ (actually the Leibniz rule and the Jacobi identity imply that the anchor is a morphism of $\Bbbk_X$-Lie algebras).

A morphism $(\cC,a)\to (\cC',a')$ of Lie algebroids defined over the same scheme $X$ is a morphism of $\cO_X$-modules $f\colon\cC\to\cC'$, which is compatible with the brackets defined in $\cC$ and in $\cC'$, and such that $a'\circ f=a$.  Note that this implies
that the kernel of a morphism of Lie algebroids has a trivial anchor, i.e., it is a sheaf of $\cO_X$-Lie algebras.

\begin{defin} A representation of a Lie algebroid $\cC$ is a pair $(\cM,\rho)$, where $\cM$ is a coherent $\cO_X$-module,
and $\rho$ is an $\cO_X$-linear morphism $\cC\to\End_\Bbbk(\cM)$ satisfying the condition
$$\rho(x)(fm) = f\rho(x)(m) +a(x)(f)m$$
for all sections $f$, $x$ and $m$ of $\cO_X$, $\cC$ and $\cM$, respectively.
\end{defin}
A representation $\cM$ of $\cC$ will also be called a $\cC$-module.
We shall denote by $\Rep(\cC)$ the category of representations of a Lie algebroid $\cC$. Given a representation $(\cM,\rho)$,
we define the {\em invariant submodule} $\cM^\cC$ of $\cM$ as the sheaf of $\Bbbk_X$-modules
$$\cM^\cC(U) = \{ m \in \cM(U) \,\vert\, \rho(\cC) (m) = 0  \}.$$
This is an $\cO_X$-module when the anchor of $\cC$ is trivial. In general, this defines a functor
$$(-)^\cC\colon\Rep(\cC)\to \Bbbk_X\hbox{\bf-mod}.$$

Assuming that $\cC$ is locally free, we introduce the {\em Chevalley-Eilenberg-de Rham complex}  of $\cC$ with coefficients in a representation $(\cM,\rho)$, which is a  sheaf of
differential graded algebras. This is  $\cM\otimes_{\cO_X}\Lambda^\bullet_{\cO_X}\cC^\ast$ as a sheaf of $\cO_X$-modules, with a product  given by the wedge product, and  a $\Bbbk$-linear differential  $d_\cC\colon \cM\otimes_{\cO_X} \Lambda^\bullet_{\cO_X}\cC^\ast\to \cM\otimes_{\cO_X}\Lambda^{\bullet+1}_{\cO_X}\cC^\ast$   defined by the  formula
  \begin{eqnarray*}\label{diff}
(d_\cC\xi)(s_1,\dots,s_{p+1}) &=& 
\sum_{i=1}^{p+1}(-1)^{i-1}\rho(s_i)(\xi(s_1,\dots,\hat s_i,
\dots,s_{p+1})) \\ & + & \sum_{i<j}(-1)^{i+j}
\xi([s_i,s_j],\dots,\hat s_i,\dots,\hat s_j,\dots,s_{p+1})
\end{eqnarray*}
  for   $s_1,\dots,s_{p+1}$ sections of $\cC$, and $\xi$ a section of $\cM\otimes_{\cO_X}\Lambda^p_{\cO_X}\cC^\ast$.
   
  The hypercohomology 
of the complex $(\cM\otimes_{\cO_X}\Lambda^\bullet_{\cO_X}\cC^\ast,d_\cC)$, denoted $\mathbb H^\bullet(\cC;\cM)$, 
 is called the {\em   Lie algebroid cohomology} of $\cC$
with coefficients in  $(\cM,\rho)$. If $X$ is affine, the hypercohomology $\mathbb H^\bullet(\cC;\cM)$
reduces the cohomology of the $(\Bbbk,\cO_X(X))$-Lie-Rinehart algebra $\cC(X)$ with coefficients in $\cM(X)$
\cite{Rinehart63}.

\subsection{Cohomology of transformation Lie algebroids}\footnote{This section is not used elsewhere in this paper. We record it here for the sake of completeness.} If $\calL$ is a locally free sheaf of $\Bbbk_X$-Lie algebras, and
$b \colon \calL \to \Dder_\Bbbk(\cO_X)$ is a morphism of Lie $\Bbbk_X$-algebras, one can, in analogy with the case
of Lie-Rinehart algebras, define the {\em transformation Lie algebroid} $\cC = \cO_X\otimes_\Bbbk \calL$, with
anchor $a(f\otimes\xi) = f\otimes b(\xi)$. Let $\cM$ be a representation of $\cC$ which is locally free as an $\cO_X$-module;
then $\cM$ is also a representation of $\calL$, and each fibre $\cM_x$ is a representation of the Lie algebra $\calL_x$
(the fibre of $\calL$ at $x\in X$. Then 
 one immediately has an isomorphism of $\Bbbk$-vector spaces 
$$\mathbb H^\bullet(\cC;\cM) \simeq \mathbb H^\bullet(\calL;\cM).$$
Moreover there is a spectral sequence converging to these groups whose second page is
$$E_2^{p,q} = H^p(X,\mathcal H^q(\calL;\cM)).$$
Here $ \mathcal H^q(\calL;\cM)$ is a vector bundle whose fibre at $x\in X$ is the Chevalley-Eilenberg cohomology
$H^q(\calL_x;\cM_x)$ of the Lie algebra $\calL_x$ with coefficients in the vector space $\cM_x$.

\subsection{Lie algebroid cohomology as a derived functor}
\label{sec:lie-algebr-cohom}
Given a locally free algebroid $\cC$, we consider the functor
$$I^\cC\colon\Rep(\cC)\to \Bbbk\hbox{\bf-mod},\qquad \cM \mapsto \Gamma(X,\cM^\cC).$$
This is left-exact, and since $\Rep(\cC)$ has enough injectives \cite{Ugo-derived}, we can take its derived functors. 
It was shown in \cite{Ugo-derived} that these derived functors are isomorphic to the hypecohomology functors, that is, for
every representation $\cM$ of $\cC$ there are functorial isomorphisms
\[ R^iI^\cC( \cM) \simeq \mathbb  H^i(\cC;\cM), \quad i\ge 0.\]
In the same way, the derived functors of the functor $(-)^\cC$ applied to a representation $\cM$ give the cohomology sheaves
of the Chevalley-Eilenberg-de Rham complex  with coefficients in  $\cM$:
\[ R^i\cM^\cC \simeq \mathcal H^i(\cC;\cM), \quad i\ge 0.\]

\subsection{A local-to-global spectral sequence} 
One has $I^\cC = \Gamma \circ (-)^\cC$. Moreover, when $\cI$ is an injective object in $\Rep(\cC)$, one has $\mathcal H^i(\cC;\cI)=0$ for $i>0$ \cite{Ugo-derived}. As a result, there is a spectral sequence, converging to $\mathbb H^\bullet(\cC;\cM)$, whose second term is
$$E_2^{pq} = H^p(X,\mathcal H^q(\cC;\cM)).$$ 
 
\subsection{A Hochshild-Serre spectral sequence} \label{HS}
Let us consider an extension of Lie algebroids as in \eqref{extalg}. 
As   we already noticed, $\calL$ is a sheaf of $\cO_X$-Lie algebras, i.e.,
it has a vanishing anchor. Thus, if $\cM$ is a representation of $\cE$, 
the $\calL$-invariant submodule  $\cM^\calL$ is an  $\cO_X$-module, and moreover,
it is a representation of $\cQ$. One has a commutative diagram of functors
\begin{equation}\label{functors}
\xymatrix{
\Rep(\cE) \ar[r]^{(-)^\calL} \ar[rd]_{I^\cE} & \Rep(\cQ) \ar[d]^{I^\cQ} \\
& \Vect}
\end{equation}
The functors $(-)^\calL$ and $I^\cQ$ are left-exact, and moreover, $(-)^\calL$
maps injective objects of $\Rep(\cE)$ to
$I^\cQ$-acyclic objects of $\Rep(\cQ)$  (\cite[Prop.~2.4.6 (vii)]{Ka-Sch},\cite{Ugo-derived}), so that there is a Grothendieck spectral sequence  converging to $R^\bullet I^\cE(\cM)$, whose second 
page is $E_2^{pq} = R^pI^\cQ(R^q\cM^\calL)$ \cite{Tohoku}. This generalizes the Hochschild-Serre spectral sequence one has for extensions of Lie algebras \cite{Hoch-Serre53,BMRT}.
Changing notation, we have \cite{Ugo-derived,BMRT}:

\begin{thm} For every representation $\cM$ of $\cE$ there is a spectral sequence   converging to  $\mathbb H^{\bullet}(\cE;\cM)$, whose second page is 
\begin{equation}\label{2ndpageAlg} E_2^{pq}=\mathbb H^p(\cQ;\mathcal H^q(\calL;\cM)).\end{equation}
\end{thm}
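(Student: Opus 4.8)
The plan is to identify \eqref{2ndpageAlg} with the Grothendieck spectral sequence of the composite functor $I^\cE = I^\cQ\circ(-)^\calL$ displayed in \eqref{functors}, all of whose input data have already been assembled in the preceding subsections.

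First I would verify that the hypotheses of the composite-functor spectral sequence hold — but each of them has been recorded above: $\Rep(\cE)$ has enough injectives; the functors $(-)^\calL\colon\Rep(\cE)\to\Rep(\cQ)$ and $I^\cQ\colon\Rep(\cQ)\to\Vect$ are left-exact; and $(-)^\calL$ sends injective objects of $\Rep(\cE)$ to $I^\cQ$-acyclic objects of $\Rep(\cQ)$. Grothendieck's theorem \cite{Tohoku} then produces, for every $\cM\in\Rep(\cE)$, a first-quadrant (hence convergent) spectral sequence
\[ E_2^{pq} \;=\; R^pI^\cQ\bigl(R^q(-)^\calL(\cM)\bigr) \;\Longrightarrow\; R^{p+q}I^\cE(\cM). \]
By the derived-functor description of Lie algebroid cohomology recalled in Section~\ref{sec:lie-algebr-cohom}, the abutment is $R^{p+q}I^\cE(\cM)\simeq\mathbb H^{p+q}(\cE;\cM)$, and for any $\cN\in\Rep(\cQ)$ one has $R^pI^\cQ(\cN)\simeq\mathbb H^p(\cQ;\cN)$. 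Thus only the inner term $R^q(-)^\calL(\cM)$ remains to be identified with $\mathcal H^q(\calL;\cM)$.

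For this I would compare the derived functors of $(-)^\calL$ taken in $\Rep(\cE)$ with those taken in $\Rep(\calL)$. Since $\calL$ has vanishing anchor, $(-)^\calL\colon\Rep(\cE)\to\Rep(\cQ)$ is the (exact) restriction functor $\Rep(\cE)\to\Rep(\calL)$ followed by $(-)^\calL\colon\Rep(\calL)\to\cO_X\hbox{\bf-mod}$, up to the cohomologically irrelevant forgetful functor $\Rep(\cQ)\to\cO_X\hbox{\bf-mod}$. Because $0\to\calL\to\cE\to\cQ\to 0$ is exact, the restriction functor admits an exact left adjoint — induction $U(\cE)\otimes_{U(\calL)}(-)$ along the universal enveloping algebroids, exact because $U(\cE)$ is flat over $U(\calL)$ by a Poincar\'e--Birkhoff--Witt filtration argument — and hence preserves injectives. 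Consequently an injective resolution of $\cM$ in $\Rep(\cE)$ also computes $R^\bullet$ of $(-)^\calL$ on $\Rep(\calL)$, so $R^q(-)^\calL(\cM)\simeq R^q\cM^\calL\simeq\mathcal H^q(\calL;\cM)$, the last isomorphism being $R^i\cM^\cC\simeq\mathcal H^i(\cC;\cM)$ specialized to $\cC=\calL$. Substituting yields $E_2^{pq}=\mathbb H^p(\cQ;\mathcal H^q(\calL;\cM))$.

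The step I expect to be the main obstacle is precisely this last identification — transferring injectivity, or at least $(-)^\calL$-acyclicity, along $\Rep(\cE)\to\Rep(\calL)$. Making the induction left adjoint precise requires the universal enveloping algebroids of $\calL$ and of $\cE$ and the flatness of the latter over the former; alternatively, and this is all the comparison of derived functors really needs, one may argue directly that $\mathcal H^q(\calL;\cI)=0$ for $q>0$ whenever $\cI$ is an injective $\cE$-module. Everything else is a formal application of the results collected in Sections~\ref{sec:lie-algebr-cohom}--\ref{HS}.
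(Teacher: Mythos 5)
Your proposal is correct and follows essentially the same route as the paper: the Grothendieck spectral sequence for the composition $I^\cE=I^\cQ\circ(-)^\calL$ from diagram \eqref{functors}, using that $(-)^\calL$ sends injectives of $\Rep(\cE)$ to $I^\cQ$-acyclic objects and then identifying the abutment and the $E_2$ terms via the derived-functor descriptions of Section~\ref{sec:lie-algebr-cohom}. The only difference is that you spell out the identification $R^q(-)^\calL(\cM)\simeq\mathcal H^q(\calL;\cM)$ via an induction adjoint, whereas the paper delegates this to the cited references.
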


It may be useful to record the explicit form of the five-term sequence of this spectral sequence:
\begin{multline}
0 \to \mathbb H^1(\cQ;\cM ) \to \mathbb H^1(\cE;\cM) \to  \mathbb H^0(\cQ;\mathcal H^1(\calL;\cM)) \\
\to \mathbb H^2(\cQ; \cM ) \to \mathbb H^2(\cE;\cM) \,.
\end{multline} 

 \subsection{The universal enveloping algebroid}  The universal enveloping algebra $\mathfrak U(L)$ of a $(\Bbbk,A)$-Lie-Rinehart algebra $L$
 was defined in \cite{Rinehart63}. It is a $\Bbbk$-algebra equipped with a homomorphism (in fact a monomorphism) $\imath\colon A \to \mathfrak{U}(L)$ of ${\Bbbk}$-algebras and a $\Bbbk$-module morphism $\jmath \colon L \to \mathfrak U(L)$, such that the following relations hold:
 \begin{align}
   \label{ideal1}
   [\jmath(s) ,\jmath (t)] - \jmath ([s,t]) &= 0\,, \quad s,t \in L\,,\\
   \label{ideal2}
   [\jmath(s), \imath (f) ] - \imath(a(s)(f)) &= 0\,, \quad s \in L, f\in A
 \end{align}
(here $a\colon L\to \operatorname{Der}_\Bbbk(A)$ is the anchor morphism). One way to construct $\mathfrak{U}(L)$ is to consider the standard enveloping algebra $U(A\rtimes L)$ of the semi-direct product $\Bbbk$-Lie algebra $A   \rtimes L$ and then mod out the ideal generated by the relation $ f (g,s) -(fg,fs)$. $\mathfrak U(L)$ is an $A$-module via the morphism $\imath$, but observe that due to~\eqref{ideal2} the left and right $A$-module structures are different. (The other relation \eqref{ideal1} simply says $\jmath$ is a morphism of $\Bbbk$-Lie algebras.) We also have a morphism $\varepsilon\colon\mathfrak U(L)\to   \mathfrak U(L)/I = A$ (the augmentation morphism) where $I$ is the ideal generated by $\jmath(L)$. Note that $\varepsilon $ is a morphism of $\mathfrak U(L)$-modules but not of $A$-modules, as
$$ \varepsilon (fs) = a(s)(f)$$
when $ f\in A$, $s\in L$.

The construction of the  universal enveloping algebra $\mathfrak U(L)$ of a $(\Bbbk,A)$-Lie-Rinehart algebra is functorial, and therefore
one can define the universal enveloping algebra $\mathfrak U(\cC)$ of a Lie algebroid $\cC$ by taking the sheaf associated with the presheaf obtained by  applying the previous definition to every $(\Bbbk,\cO_X(U))$-Lie-Rinehart algebra $\cC(U)$, where $U$ runs over the open sets in $X$ \cite{Ugo-derived}.

 \subsection{Lie algebroid hypercohomology and  derivations}  We assume that the Lie algebroid $\cC$ is locally free.
  The category $\Rep(\cC)$ and the category of $\mathfrak U(\cC)$-modules are equivalent,
 and  the functors $\mathbb H^i(\cC;-)$ and
 $\Ext^i_{\mathfrak U(\cC)}(\cO_X,-)$ are isomorphic as functors $\Rep(\cC) \to \Bbbk\hbox{\bf-mod}$ \cite{Ugo-derived} . Let $\cJ$ be the kernel of the augmentation morphism $\mathfrak U(\cC) \to \cO_X$.
 By applying the functor $\Hom_{\mathfrak U(\cC)}(-,\cM)$ to the exact sequence of $\mathfrak U(\cC)$-modules
 \begin{equation}
   \label{eq:I}
   0 \to \cJ \to \mathfrak U(\cC) \to \cO_X \to 0\,.
 \end{equation}
 we obtain the exact sequence
 \[ 0 \to I^\cC(\cM) \to \Gamma(X,\cM) \to \Hom_{\mathfrak U(\cC)}(\cJ,\cM)  \to  
   \mathbb H^1(\cC;\cM)   \to H^1(X,\cM)  
 \]
So every element in $ \mathbb H^1(\cC;\cM) $ which goes to zero in $ H^1(X,\cM) $ (for instance, this will always happen if $X$ is affine)
 is represented by a morphism of $\mathfrak U(\cC)$-modules $\phi\colon \cJ \to \cM$. This in turn induces a morphism $D_\phi\colon \cC\to \cM$ by letting
$$D_\phi(x) = \phi(i(x))$$
where $i$ is the natural inclusion $\cC\to\mathcal I$. This is a  derivation of $\cC$ with values in $\cM$, as one has
\begin{equation}\label{DerLM} D_\phi(\{x,y\})   = \phi(i(\{x,y\}))) = \phi(i(x)i(y)-i(y)i(x)) = x(D_\phi(y))-y(D_\phi(x)).
\end{equation} 
The morphism $\phi\mapsto D_\phi$ establishes indeed an isomorphism
\[\Hom_{\mathfrak U(\cC)}(\cJ,\cM) \simeq \Der(\cC,\cM)\,.\]
Thus the module $\cJ$ (co)represents the functor of derivations
\( \cM \mapsto \Der(\cC,\cM)\,.\)
There is an entirely similar situation for the sheaf of derivations, namely
\[\cHom_{\mathfrak U(\cC)}(\cJ,\cM) \simeq \Dder_{\Bbbk}(\cC,\cM)\,.\]  

\subsection{The truncated complex} \label{truncated}
Under the standing assumption that $\cC$ is locally free, from~\cite{Rinehart63} we obtain that the homological version of the Chevalley-Eilenberg complex of $\cC$ is a flat resolution of $\cO_X$:
\[
  \dotsi \to \mathfrak U(\cC)\otimes_{\cO_X}\Lambda^2\cC \to 
  \mathfrak U(\cC)\otimes_{\cO_X}\cC \to \mathfrak U(\cC) \to
  \cO_X \to 0 \,.
\]
Using~\eqref{eq:I} to slice the above sequence we obtain in turn a flat resolution of the module $\cJ$:
\begin{equation}
  \label{res-I}
  \dotsi \to \mathfrak U(\cC)\otimes_{\cO_X}\Lambda^2\cC \to 
  \mathfrak U(\cC)\otimes_{\cO_X}\cC \to \cJ \to 0\,.
\end{equation}
The above motivates to consider for a Lie algebroid $\cC$ and a $\cC$-module $\cM$, the \emph{sharp} truncation of the Chevalley-Eilenberg complex $\sigma^{\ge 1}\Lambda^\bullet\cC^\ast\otimes \cM$ defined by
\[
  \xymatrix{0 \ar[r] & \cC^\ast\otimes \cM \ar[r] &
  \Lambda^2\cC^\ast \otimes\cM  \ar[r] & \dotsi }
\]
with the term $\Lambda^p\cC^\ast\otimes\cM$ placed in degree $p$. There is an obvious short exact sequence
\[
  \xymatrix{
    0 \ar[r] & \sigma^{\ge 1}\Lambda^\bullet\cC^\ast\otimes \cM \ar[r]
    & \Lambda^\bullet\cC^\ast\otimes \cM \ar[r]
    & \cM [0] \ar[r] &  0\,,}
\]
where $\cM[0]$ is the complexe consisting solely of $\cM$ placed in degree $0$, giving rise to the long exact sequence
\begin{equation}
  \dots \to  \mathbb H^i (\cC; \cM)^{(1)} \to  \mathbb H^i(\cC; \cM) \to H^{i} (X, \cM ) \to \mathbb{H}^{i+1}(\cC, \cM)^{(1)} \to\, \dotsi
  \label{les}
\end{equation}
where we have denoted by $\mathbb H^\bullet(\cC;\cM)^{(1)}$ the hypercohomology of the truncated complex. Note that $\mathbb H^0(\cC;\cM)^{(1)}=0$.

\begin{remark} When $X$ is affine the exact sequence \eqref{les} splits into
$$ 0 \to  \mathbb H^0(\cC; \cM) \to H^{0} (X, \cM ) \to \mathbb{H}^{1}(\cC, \cM)^{(1)} \to \mathbb{H}^{1}(\cC, \cM) \to 0,$$
$$ \mathbb{H}^{i}(\cC, \cM)^{(1)}  \simeq  \mathbb{H}^{i}(\cC, \cM) ,\quad i\ge 2.$$
\label{affine}
\end{remark}

By applying the functor $\Hom_{\mathfrak U(\cC)}( - , \cM)$ to the short exact sequence~\eqref{eq:I}, we also obtain the long exact sequence
\begin{equation}
  \dots \to  \mathbb H^i (\cC; \cM) \to H^{i} (X, \cM ) \to \Ext^i_{\mathfrak U(\cC)}(\cJ,\cM)\to \mathbb{H}^{i+1}(\cC; \cM) \to\, \dotsi
  \label{les-1}
\end{equation}
where we have used that $\Hom_{\mathfrak U(\cC)}( \mathfrak{U}(\cC) , \cM)\simeq \Gamma(X,\cM)$, as well as $R^iI^\cC( \cM) \simeq \Ext^i_{\mathfrak U(\cC)}(\cO_X,\cM)$, combined with the identification with the Lie Algebroid cohomology recalled in sect.~\ref{sec:lie-algebr-cohom}. Comparing the sequences~\eqref{les} and~\eqref{les-1} we obtain the isomorphisms
\begin{equation}
  \label{ext-trunc}
  \Ext^i_{\mathfrak U(\cC)}(\cJ,\cM) \simeq \mathbb{H}^{i+1}(\cC, \cM)^{(1)}\,,\quad i\geq 0\,.
\end{equation}
One easily shows that the derivation functor
$$ \Der(\cC,-) \colon \Rep(\cC) \to \Vect,\qquad \cM \mapsto  \Der(\cC,\cM)$$
is left-exact. As a simple corollary of what was proved in \cite{Ugo-derived}, we have the following
\begin{prop}\label{Der&Trunc}
  The cohomology of the truncated complex is isomorphic (up to a shift) to the derived functors of the derivation functor:
  \[
    R^i\Der(\cC,\cM) \simeq \mathbb{H}^{i+1}(\cC;\cM)^{(1)}\,.
  \]
\end{prop}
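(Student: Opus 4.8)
The plan is to deduce Proposition~\ref{Der&Trunc} by combining the derived-functor identifications already established in the excerpt. First I would recall the three ingredients: (i) the isomorphism $\Hom_{\mathfrak U(\cC)}(\cJ,\cM)\simeq\Der(\cC,\cM)$, which is functorial in $\cM$ (the construction $\phi\mapsto D_\phi$ is plainly natural); (ii) the isomorphism $\Ext^i_{\mathfrak U(\cC)}(\cJ,\cM)\simeq\mathbb H^{i+1}(\cC;\cM)^{(1)}$ for $i\ge 0$, obtained in~\eqref{ext-trunc} by comparing the long exact sequences~\eqref{les} and~\eqref{les-1}; and (iii) the left-exactness of $\Der(\cC,-)$, which is noted just before the statement and follows from (i) together with left-exactness of $\Hom_{\mathfrak U(\cC)}(\cJ,-)$.

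The key step is then to identify $R^i\Der(\cC,-)$ with $\Ext^i_{\mathfrak U(\cC)}(\cJ,-)$. Since $\Der(\cC,-)$ is isomorphic as a functor on $\Rep(\cC)\simeq\mathfrak U(\cC)\text{-mod}$ to $\Hom_{\mathfrak U(\cC)}(\cJ,-)$ by~(i), and the category $\Rep(\cC)$ has enough injectives (as recalled in Section~\ref{sec:lie-algebr-cohom}), the derived functors of the two coincide: $R^i\Der(\cC,\cM)\simeq R^i\Hom_{\mathfrak U(\cC)}(\cJ,\cM)=\Ext^i_{\mathfrak U(\cC)}(\cJ,\cM)$. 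Chaining this with~\eqref{ext-trunc} gives $R^i\Der(\cC,\cM)\simeq\mathbb H^{i+1}(\cC;\cM)^{(1)}$, which is the claim. One should remark that all isomorphisms in sight are natural in $\cM$, so the identification is an isomorphism of $\delta$-functors, not merely a groupwise one; this is what makes "derived functor" meaningful and is implicit in calling the result a corollary of~\cite{Ugo-derived}.

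The main obstacle, modest as it is, lies in checking that the identification $\Ext^i_{\mathfrak U(\cC)}(\cJ,\cM)\simeq R^i\Hom_{\mathfrak U(\cC)}(\cJ,\cM)$ is legitimate in the sheafy/representation-theoretic setting, i.e.\ that $\Ext$ here is the right-derived functor of $\Hom_{\mathfrak U(\cC)}(\cJ,-)$ computed in $\Rep(\cC)$ rather than some hyper-Ext over the sheaf of rings $\mathfrak U(\cC)$. This is exactly the content of the equivalence $\Rep(\cC)\simeq\mathfrak U(\cC)\text{-mod}$ quoted from~\cite{Ugo-derived}, under which injective resolutions match up; alternatively one can use the flat resolution~\eqref{res-I} of $\cJ$ to compute the same $\Ext$ groups and verify compatibility with the long exact sequences used in~\eqref{ext-trunc}. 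Once this bookkeeping is in place, the proof is a two-line concatenation of~\eqref{ext-trunc} with the naturality of $\phi\mapsto D_\phi$.
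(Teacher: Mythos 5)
Your proposal is correct and follows exactly the paper's route: the paper's own proof is the one-line observation that one is deriving $\Der(\cC,-)\simeq\Hom_{\mathfrak U(\cC)}(\cJ,-)$, with the identification $\Ext^i_{\mathfrak U(\cC)}(\cJ,\cM)\simeq\mathbb H^{i+1}(\cC;\cM)^{(1)}$ of~\eqref{ext-trunc} doing the rest. You have merely made explicit the naturality and enough-injectives bookkeeping that the paper leaves implicit.
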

\begin{proof}
  In effect, we are deriving $\Der(\cC; - ) \simeq \Hom_{\mathfrak{U}(\cC)} (\cJ, - )$.
\end{proof}

\begin{remark} If we analogously denote by $\mathcal H^\bullet(\cC;\cM)^{(1)}$ the cohomology sheaves of the truncated complex, we have of course
  \[\mathcal H^0(\cC;\cM)^{(1)}=0,\qquad \mathcal H^i(\cC;\cM)^{(1)}=\mathcal H^i(\cC;\cM)\quad\mbox{for}\quad i>1\,,\]
  and
  \[ \mathcal{H}^{i+1}(\cC;\cM)^{(1)} \simeq R^i\Dder_{\Bbbk}(\cC,\cM)\,; \quad\text{in particular,}\  \mathcal{H}^{1}(\cC;\cM)^{(1)} \simeq\Dder_{\Bbbk}(\cC,\cM)\,.\]
\end{remark}

\begin{remark} \label{hyperzero}
The hypercohomology $\mathbb H^\bullet(\cC;\cM)^{(1)}$ can be computed as the cohomology of the total complex of the double
complex $H^{p,q} = \check C^p(\mathfrak U, \sigma^{\ge 1}\Lambda^q\cC^\ast\otimes \cM$), where $\mathfrak U$ is any affine cover of $X$, and $\check C^\bullet$ denotes the associated \v Cech complex.
Since $H^{0,0}=0$, we have $\mathbb H^0(\cC;\cM)^{(1)}=0$, as already noted.  
\end{remark}

\subsection{A long exact sequence for the $\Der$ functor} \label{Derfunct}
Let us consider an extension of Lie algebroids as in \eqref{extalg}. As we already discussed, the center $Z(\calL)$ is
a $\cQ$-module, and hence also an $\cE$-module. Moreover, we give is also the structure of a trivial $\calL$-module.
Note that the forgetful functors $\Rep(\cQ)\to\Rep(\cE)$ and $\Rep(\cE)\to\Rep(\calL)$ are exact, so that they map
injective objects of $\Rep(\cQ)$ to injective objects of $\Rep(\cE)$ and $\Rep(\calL)$. We take an injective resolution $\cI^\bullet$ of $Z(\calL)$ as an object in $\Rep(\cQ)$, and apply the functor $\Der(-,\cI^\bullet)$ to the exact sequence \eqref{extalg}.
We obtain an exact sequence of complexes of vector spaces
$$ 0 \to \Der(\cQ,\cI^\bullet)\to \Der(\cE,\cI^\bullet)\to \Der(\calL,\cI^\bullet) \to 0 $$
and taking the long exact sequence of cohomology we obtain (as $ R^i\Der(\cQ,Z(\calL))  $ $\simeq  
\mathbb H^{i+1}(\cQ;Z(\calL))^{(1)}$)
\begin{multline}\label{Derseq} 
 0 \to \Der(\cQ,Z(\calL)\to \Der(\cE,Z(\calL))\to    \Der(\calL,Z(\calL))  \xrightarrow{\delta} \\  \mathbb H^{2}(\cQ;Z(\calL))^{(1)} \to \mathbb H^{2}(\cE;Z(\calL))^{(1)} \to \dots 
 \end{multline}

\bigskip
\section{Free Lie algebroids} \label{freealg}
\subsection{Free Lie and associative algebras over a set} \label{free}
We start by recalling the construction of the free Lie algebra over a set and it
universal enveloping algebra (\cite{Reutenauer}, see also  \cite{Sternberg-LieAlgebras}).
Let $\Bbbk$ be a field and $A$ a commutative, associative $\Bbbk$-algebra with unit. 
We briefly remind the construction of a free Lie algebra over a set, and its universal
enveloping algebra.  If  $S$ is a set, and $M_S$ the associated magma,
the vector space $A_S=\Bbbk[M_S]$ freely generated by $M_S$ over $\Bbbk$ is 
an algebra over $\Bbbk$, with product given by the product in the magma.

Let $I_S$ be the  two-sided ideal ideal generated in $A_S$  by the elements
$$xx,\quad x\in A_S \quad \mbox{and} \quad (xy)z+(zx)y+(yz)x,\quad x,y,z\in A_S.$$
The quotient $A_S/I_S$ is a $\Bbbk$-Lie algebra --- the free $\Bbbk$-Lie algebra over $S$ ---
that we denote   $\operatorname{Lie}_{\,\Bbbk,S}$. 

Similarly, let $V_S$ be the vector space freely generated by $S$ over $\Bbbk$, and
let $\operatorname{Ass}_{\Bbbk,S}$ be its tensor algebra --- the free associative $\Bbbk$-algebra over the set $S$. 
$\operatorname{Ass}_{\Bbbk,S}$ turns out to be isomorphic to the universal enveloping algebra of 
$\operatorname{Lie}_{\,\Bbbk,S}$ \cite{Reutenauer,Sternberg-LieAlgebras}.

Any Lie algebra $\mathfrak g$ over $\Bbbk$ can be realized as the quotient of a free Lie algebra.
If $S=\{x_i\}$ is a set of generators of $\mathfrak g$, one has indeed a surjection 
$\operatorname{Lie}_{\,\Bbbk,S}\to\mathfrak g$.

\subsection{The free Lie-Rinehart  algebra over a set} \label{freeLR}
Let $\Bbbk$ be a field, and $A$ a commutative associative algebra over $\Bbbk$.
Let $S$ be a set equipped with a map $a_S\colon S \to \Der_\Bbbk (A)$ such the
induced map $\operatorname{Lie}_{\,\Bbbk,S} \to \Der_\Bbbk (A)$ is 
a morphism of $\Bbbk$-Lie algebras (that we denote by the same symbol).\footnote{We could equivalently require
the existence of the map $\operatorname{Lie}_{\,\Bbbk,S} \to \Der_\Bbbk(A)$, as the composition with the canonical
map $S\to \operatorname{Lie}_{\,\Bbbk,S}$ yields the associated map $S\to \Der_\Bbbk(A)$.}
With this data, we can make 
$$L_{A,S} = A \otimes_\Bbbk \operatorname{Lie}_{\,\Bbbk,S} $$
into a $(\Bbbk,A)$-Lie-Rinehart algebra. We call this the {\em free $(\Bbbk,A)$-Lie-Rinehart algebra} over the pair $(S,a_S)$. It has a natural map $S \to L_{A,S}$. 

\begin{prop}\label{mapexists} Let $(L,a_L)$ be a $(\Bbbk,A)$-Lie-Rinehart algebra, and let $f\colon S\to L$ be a map such that the
diagram
$$\xymatrix{
S \ar[r]^f \ar[dr]_{a_S} & L \ar[d]^{a_L} \\ & \Der_\Bbbk(A)}
$$ 
commutes. There is a unique Lie-Rinehart algebra morphism $g\colon  L_{A,S} \to L$ such that the diagram 
$$\xymatrix{
S \ar[rd]^f \ar[d] \\   L_{A,S}  \ar[r]_{g}  & L
}$$ 
commutes. 
\end{prop}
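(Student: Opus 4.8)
The plan is to build $g$ in two stages, first on the purely Lie-theoretic part $\operatorname{Lie}_{\,\Bbbk,S}$ and then extending $A$-linearly to $L_{A,S}=A\otimes_\Bbbk\operatorname{Lie}_{\,\Bbbk,S}$, and then to check all the compatibilities. For the first stage, I would observe that the composition $S\xrightarrow{f}L$ lands in $L$, which is in particular a $\Bbbk$-Lie algebra under $\{\,,\,\}$; by the universal property of the free Lie algebra $\operatorname{Lie}_{\,\Bbbk,S}$ recalled in Section \ref{free}, this map extends uniquely to a $\Bbbk$-Lie algebra homomorphism $\tilde g\colon\operatorname{Lie}_{\,\Bbbk,S}\to L$. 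For the second stage, I would define $g\colon A\otimes_\Bbbk\operatorname{Lie}_{\,\Bbbk,S}\to L$ by $g(\alpha\otimes s)=\alpha\,\tilde g(s)$ (well-defined as $\tilde g$ is $\Bbbk$-linear and $L$ is an $A$-module), which is manifestly $A$-linear and makes the required triangle with $S\to L_{A,S}$ commute.

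The substance of the proof is verifying that $g$ is a morphism of Lie-Rinehart algebras, i.e.\ that it respects the anchors and the brackets. For the anchors: on the generators $s\in S$ we have $a_L(g(s))=a_L(f(s))=a_S(s)=a_{L_{A,S}}(s)$ by the hypothesis that the triangle involving $a_S,a_L,f$ commutes and by construction of the anchor $a_{L_{A,S}}$ on the free object; since both $a_L\circ g$ and $a_{L_{A,S}}$ are the $\Bbbk$-linear anchor maps and agree on generators, and both satisfy $a(\alpha\otimes s)=\alpha\,a(s)$ (here using that $a_{L_{A,S}}(\alpha\otimes s)=\alpha\,a(s)$ by the construction in Section \ref{freeLR}, and that $a_L(\alpha\,\tilde g(s))=\alpha\,a_L(\tilde g(s))$ since $a_L$ is $A$-linear), they coincide. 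For the bracket: one must check $g(\{u,v\}_{L_{A,S}})=\{g(u),g(v)\}_L$ for $u=\alpha\otimes s$, $v=\beta\otimes t$. Expanding the left side via the explicit Lie-Rinehart bracket on $A\otimes_\Bbbk\operatorname{Lie}_{\,\Bbbk,S}$ — which by the Leibniz rule and the structure described in Section \ref{BenjiVolodya} equals $\alpha\beta\,[s,t]_{\operatorname{Lie}} + \alpha\,a_S(s)(\beta)\,t - \beta\,a_S(t)(\alpha)\,s$ — and applying $g$, then comparing with the expansion of $\{g(u),g(v)\}_L=\{\alpha\tilde g(s),\beta\tilde g(t)\}_L$ using the Leibniz rule \eqref{leibniz} in $L$ together with $\tilde g([s,t]_{\operatorname{Lie}})=\{\tilde g(s),\tilde g(t)\}_L$ and $a_L(\tilde g(s))=a_S(s)$ on generators, the two sides match term by term.

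Uniqueness is immediate: any Lie-Rinehart morphism $g'$ fitting in the triangle must agree with $f$ on $S$, hence with $\tilde g$ on all of $\operatorname{Lie}_{\,\Bbbk,S}$ (by uniqueness in the free Lie algebra universal property), hence with $g$ on $A\otimes_\Bbbk\operatorname{Lie}_{\,\Bbbk,S}$ by $A$-linearity, since $\{\alpha\otimes s\mid \alpha\in A,\ s\in\operatorname{Lie}_{\,\Bbbk,S}\}$ generates $L_{A,S}$ as an abelian group and any morphism of $A$-modules is determined by its values on a generating set. The main obstacle I anticipate is the bracket computation in the second stage — making sure the "anchor correction" terms $\alpha\,a_S(s)(\beta)\,t$ produced by the Leibniz rule on the free side are exactly reproduced by the Leibniz rule in $L$ after applying $g$; this is a bookkeeping matter but is where the hypothesis on commutativity of the anchor triangle is genuinely used, and it is worth displaying the two expansions explicitly to see they coincide.
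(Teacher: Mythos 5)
Your proposal is correct and follows the same route as the paper's proof: extend $f$ to $\tilde g\colon\operatorname{Lie}_{\,\Bbbk,S}\to L$ by the universal property of the free Lie algebra, set $g(\alpha\otimes\xi)=\alpha\,\tilde g(\xi)$, and check anchor compatibility on generators. You are in fact somewhat more careful than the paper, which omits the explicit bracket verification and the uniqueness argument that you spell out.
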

\begin{proof}
By regarding $L$ as a $\Bbbk$-Lie algebra, there is a map $\tilde g\colon \operatorname{Lie}_{\,\Bbbk,S} \to L$ making the diagram
$$\xymatrix{
S \ar[rd]^f \ar[d]_{i_S} \\  \operatorname{Lie}_{\,\Bbbk,S}  \ar[r]_{\tilde g}  & L
}$$ 
commutative. This defines the map $g$ as $g(\alpha\otimes\xi) = \alpha\,\tilde g(\xi)$. The only thing we need to check is the compatibility between the anchors. If we set $\xi=i_S(s)$, we have indeed
$$a_L(g(\alpha\otimes \xi) )= \alpha\,a_L(\tilde g(i_S(s)))
=\alpha\,a_L(f(s)) =\alpha\,a_S(s) = a_{L_{A,S}}(\alpha\otimes\xi).
$$

\end{proof}

The universal enveloping algebra of $L_{A,S}$ can be constructed as follows. 
Let $\operatorname{Ass}_{\Bbbk, A\sqcup S}$ be the free associative $\Bbbk$-algebra generated by $S$ and $A$,\footnote{$A$ is considered as a set.} and 
let $\imath_{A\sqcup S}\colon S \to \operatorname{Ass}_{A,S}$ be the natural map. Let us temporarily denote the image of the elements of $A\sqcup S$ under $\imath_{A\sqcup S}$ by angle brackets: $\langle \alpha\rangle = \imath_{A\sqcup S} (\alpha)$, and $\langle s\rangle = \imath_{A\sqcup S} (s)$. Now, let $J$ be the two-sided ideal in $\operatorname{Ass}_{\Bbbk, A\sqcup S}$ generated by the elements
\begin{equation}\label{J}
  \begin{gathered}
    \langle \alpha\rangle + \langle\beta\rangle - \langle \alpha+\beta\rangle\,,\\
    \langle \alpha\rangle  \langle\beta\rangle - \langle \alpha\,\beta\rangle\,,\\
    \langle s\rangle  \langle\alpha\rangle - \langle \alpha\rangle
    \langle s\rangle - \langle a_S(s)(\alpha)\rangle \,,\\
    k - \langle k\rangle\,,
  \end{gathered}
\end{equation}
for all $\alpha,\beta\in A$, $s\in S$,  and $k\in\Bbbk$, and define
\begin{equation}
  \widetilde{\operatorname{Ass}}_{A,S} = \operatorname{Ass}_{\Bbbk, A\sqcup S}/J\,.
\end{equation}
$\widetilde{\operatorname{Ass}}_{A,S}$ is a $\Bbbk$-algebra\footnote{Observe that the relation $0-\langle 0\rangle$ automatically holds by cancellation.} and the map $\imath_A\colon A\to \widetilde{\operatorname{Ass}}_{A,S}$ defined by sending $\alpha$ to the class of the generator $\langle\alpha\rangle$ is a $\Bbbk$-algebra homomorphism.
\begin{remark}
  One can informally define $\widetilde{\operatorname{Ass}}_{A,S}$ as the $\Bbbk$-algebra generated by $A$ (as a $\Bbbk$-algebra) and $S$ subject to the relation $s \alpha - \alpha s = a_S(s)(\alpha)$. Thus 
  there are certain similarities between $\widetilde{\operatorname{Ass}}_{A,S}$ and the so-called ``skew polynomial rings,'' or formal differential operator algebras, and extensions thereof (see, for instance \cite{MR3468728}). In fact they are the same when $S$ is equal to a singleton.
\end{remark}
\begin{remark}
  There is an alternative---perhaps more conceptual---definition of $\widetilde{\operatorname{Ass}}_{A,S}$, where we realize it as an algebra of differential operators defined in the following way.

  Let $\operatorname{Ass}_{A,S}=A\otimes_\Bbbk \operatorname{Ass}_{\Bbbk,S}$ be the free $A$-algebra generated by $S$, and let $R = \End_k(\operatorname{Ass}_{A,S})$ denote the ring of endomorphism of the underlying $\Bbbk$-module. For all $\alpha,\beta\in A$ and $s\in S$ consider the following endomorphisms in $R$:
  \begin{align*}
    \sigma_s(\alpha\otimes x) &= \alpha\otimes sx + a_S(s)(\alpha)\otimes s\,, \\
    \sigma_\beta(\alpha\otimes x) &= \beta\alpha \otimes x\,,
  \end{align*}
  where $x$ is any word of the generators in $S$, namely $x=s_1\otimes \dots \otimes s_n$, for some $n\in \N$, and $sx$ means $s\otimes x$. It is not difficult to see that the subring of $R$ generated by the above endomorphisms is isomorphic to $\widetilde{\operatorname{Ass}}_{A,S}$ as a $\Bbbk$-algebra, and that the map $\imath_A$ defined above corresponds to the map $\alpha\mapsto \sigma_\alpha$, for all $\alpha\in A$.
\end{remark}
The algebra $\widetilde{\operatorname{Ass}}_{A,S}$ admits two notable filtrations (cf.\ \cite[\S 5.2]{Kapranov-free}). One is recursively defined by $F^n \widetilde{\operatorname{Ass}}_{A,S}=0$ if $n<0$, and
\[
  F^n \widetilde{\operatorname{Ass}}_{A,S} = \Bigl\lbrace u \;\Big\vert\;
  [u, A]\subseteq F^{n-1}\widetilde{\operatorname{Ass}}_{A,S} \Bigr\rbrace.
\]
This is like the standard filtration one would use in rings of differential operators, and it makes  $\widetilde{\operatorname{Ass}}_{A,S}$ a $D$-algebra in the sense of \cite{MR1237825}. The other filtration is based on the number of generators from $S$, namely define $G^n \widetilde{\operatorname{Ass}}_{A,S}$ to be the $\Bbbk$-submodule generated by words in elements of $A$ and $S$ containing up to $n$ elements of $S$, that is
\begin{equation}
  \label{G-filt}
  G^n \widetilde{\operatorname{Ass}}_{A,S}= \Bigl\lbrace \sum \alpha_{i_0}\,s_{i_1} \alpha_{i_1}\dots s_{i_k}\alpha_{i_k} \;\big\vert\; s_{i_j}\in S,\, \alpha_{i_j} \in A \Bigr\rbrace,
\end{equation}
where the sum is over finite subset $\{s_{i_1},\dots ,s_{i_k}\}\subset S,\: k\leq n$.  The number $n$ above is the ``degree'' of $u$, which is well defined thanks to the relation $s\alpha - \alpha s = a_S(s)(\alpha)$.
(We can think of an element $u$ as a noncommutative polynomial in the variables $\lbrace s\in S\rbrace$ with coefficients in $A$.) Then, it is immediately verified that
\begin{equation}
  \label{FG-grad}
  \gr^F_\bullet \widetilde{\operatorname{Ass}}_{A,S} = A [ S ]\,, \;\text{and}\;
  \gr^G_\bullet \widetilde{\operatorname{Ass}}_{A,S} = \operatorname{Ass}_{A,S},
\end{equation}
where on the left we have the free (polynomial) commutative $A$-algebra generated by $S$, whereas on the right we have the free (not necessarily commutative) $A$-algebra generated by $S$.
\begin{remark}
  \label{rem-G-filt}
  Whereas $G^\bullet$ only defines a filtration on $\widetilde{\operatorname{Ass}}_{A,S}$, observe that for each $u\in G^n \widetilde{\operatorname{Ass}}_{A,S}$ its class in $\operatorname{Ass}_{A,S}$ has a representative we can define by choosing a definite ordering, for example by placing all generators from $S$ to the right. Thus $u$ can unambiguously be written as
  \(
  \sum_{k\leq n} u^{(k)}
  \)
  where each term $u^{(k)}$ is inductively defined to have the same formal expression in terms of $A$ and $S$ as each projection $\pi_k(u -\sum_{n\geq l > k} u^{(l)})$ in $\gr^G_k\bigl(\widetilde{\operatorname{Ass}}_{A,S}\bigr)$, $k\leq n$, namely each $u^{(k)}$ is of the form
  \(
  u^{(k)} = \alpha_k \, s_{i_1}\dots s_{i_k}.
  \)
\end{remark}
\begin{prop}
 $\widetilde{\operatorname{Ass}}_{A,S} $ is isomorphic to the universal enveloping algebra
 of the free   $(\Bbbk,A)$-Lie-Rinehart algebra $L_{A,S}$
 over $S$.
 \end{prop}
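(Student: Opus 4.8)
The plan is to prove the isomorphism by exhibiting mutually inverse $\Bbbk$-algebra homomorphisms
\[
  \Phi\colon \widetilde{\operatorname{Ass}}_{A,S}\longrightarrow \mathfrak U(L_{A,S}),
  \qquad
  \Psi\colon \mathfrak U(L_{A,S})\longrightarrow \widetilde{\operatorname{Ass}}_{A,S},
\]
each produced from a universal property: $\Phi$ from the presentation of $\widetilde{\operatorname{Ass}}_{A,S}$ as the free associative $\Bbbk$-algebra on $A\sqcup S$ modulo the ideal $J$ of \eqref{J}, and $\Psi$ from the universal property of the enveloping algebra of a Lie--Rinehart algebra, namely that a $\Bbbk$-algebra map out of $\mathfrak U(L_{A,S})$ is the same as a pair $(\phi,\psi)$ consisting of a unital $\Bbbk$-algebra map $\phi\colon A\to B$ and a $\Bbbk$-linear $\psi\colon L_{A,S}\to B$ satisfying \eqref{ideal1}, \eqref{ideal2} and $\psi(f\ell)=\phi(f)\psi(\ell)$ (the last relation coming from the quotient by $f(g,s)-(fg,fs)$).

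To build $\Phi$, I would combine $\imath\colon A\to\mathfrak U(L_{A,S})$ with the composition $S\to L_{A,S}\xrightarrow{\jmath}\mathfrak U(L_{A,S})$, $s\mapsto\jmath(1\otimes s)$, into a set map $A\sqcup S\to\mathfrak U(L_{A,S})$, hence a $\Bbbk$-algebra map $\operatorname{Ass}_{\Bbbk,A\sqcup S}\to\mathfrak U(L_{A,S})$, and then check that the four families of generators of $J$ are sent to $0$. The first two and the last families vanish because $\imath$ is a unital $\Bbbk$-algebra homomorphism, and $\langle s\rangle\langle\alpha\rangle-\langle\alpha\rangle\langle s\rangle-\langle a_S(s)(\alpha)\rangle$ is sent to $[\jmath(1\otimes s),\imath(\alpha)]-\imath(a_S(s)(\alpha))$, which is zero by \eqref{ideal2} once one notes that the anchor of $L_{A,S}$ satisfies $a_{L_{A,S}}(1\otimes s)=a_S(s)$.

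To build $\Psi$, I would first use the freeness of $\operatorname{Lie}_{\,\Bbbk,S}$ to extend the set map $s\mapsto\langle s\rangle\in\widetilde{\operatorname{Ass}}_{A,S}$ to a $\Bbbk$-Lie algebra homomorphism $\widetilde\psi\colon\operatorname{Lie}_{\,\Bbbk,S}\to\widetilde{\operatorname{Ass}}_{A,S}$ (commutator bracket on the target), then set $\phi=\imath_A$ and $\psi\colon L_{A,S}\to\widetilde{\operatorname{Ass}}_{A,S}$, $\psi(f\otimes\xi)=\imath_A(f)\,\widetilde\psi(\xi)$. To feed $(\phi,\psi)$ into the universal property I must check that $\psi$ is $\Bbbk$- and $A$-linear (immediate from the formula), that $[\psi(f\otimes\xi),\phi(g)]=\phi\bigl(a_{L_{A,S}}(f\otimes\xi)(g)\bigr)$, and that $\psi$ preserves the transformation-type bracket $[f\otimes\xi,g\otimes\eta]=fg\otimes[\xi,\eta]+f\,a_S(\xi)(g)\otimes\eta-g\,a_S(\eta)(f)\otimes\xi$ of $L_{A,S}$. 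The technical heart — and the step I expect to require the most care — is the auxiliary identity $[\widetilde\psi(\xi),\imath_A(g)]=\imath_A\bigl(a_S(\xi)(g)\bigr)$ for all $\xi\in\operatorname{Lie}_{\,\Bbbk,S}$ and $g\in A$: it holds for $\xi\in S$ directly from the relations defining $J$, and it propagates to all of $\operatorname{Lie}_{\,\Bbbk,S}$ by induction on bracket length via the Jacobi identity in $\widetilde{\operatorname{Ass}}_{A,S}$ and the fact that $a_S$ is a Lie algebra homomorphism into $\Der_\Bbbk(A)$ (equivalently, the $\xi$ for which it holds form a Lie subalgebra containing $S$). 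Granting this identity, the anchor compatibility and the Lie-homomorphism property of $\psi$ each follow from a one-line expansion using the relation $\widetilde\psi(\xi)\imath_A(g)=\imath_A(g)\widetilde\psi(\xi)+\imath_A(a_S(\xi)(g))$, and the universal property then yields $\Psi$ with $\Psi\circ\imath=\imath_A$ and $\Psi\circ\jmath=\psi$.

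Finally I would check $\Phi\circ\Psi=\operatorname{id}$ and $\Psi\circ\Phi=\operatorname{id}$ by evaluating on generators: $\widetilde{\operatorname{Ass}}_{A,S}$ is generated as a $\Bbbk$-algebra by $\imath_A(A)$ together with the $\langle s\rangle$, $s\in S$, and $\mathfrak U(L_{A,S})$ is generated by $\imath(A)$ together with the $\jmath(1\otimes s)$, $s\in S$ — because $\jmath(L_{A,S})$ is $A$-spanned by the elements $\jmath(1\otimes\xi)$, the assignment $\xi\mapsto 1\otimes\xi$ embeds $\operatorname{Lie}_{\,\Bbbk,S}$ as a Lie subalgebra of $L_{A,S}$, and $\operatorname{Lie}_{\,\Bbbk,S}$ is Lie-generated by $S$. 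On these generators both composites are visibly the identity, so $\Phi$ and $\Psi$ are mutually inverse isomorphisms of $\Bbbk$-algebras compatible with the maps from $A$, which is the claim. Apart from the inductive identity above and the short bracket computations in the construction of $\Psi$, everything is formal.
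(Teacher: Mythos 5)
Your proposal is correct and follows essentially the same route as the paper: both arguments rest on the two universal properties (that of $\operatorname{Ass}_{\Bbbk,A\sqcup S}$ modulo the ideal $J$, and that of the enveloping algebra of a Lie--Rinehart algebra), the paper concluding by showing $\widetilde{\operatorname{Ass}}_{A,S}$ itself satisfies the universal property characterizing $\mathfrak U(L_{A,S})$, while you specialize the same constructions to produce the two maps explicitly and check they are mutually inverse on generators. Your inductive verification of $[\widetilde\psi(\xi),\imath_A(g)]=\imath_A(a_S(\xi)(g))$ merely spells out what the paper delegates to Proposition \ref{mapexists}.
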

 \begin{proof}
   To begin with, we recall the universal property of the universal enveloping algebra $\mathfrak{U}(L)$ of a $(\Bbbk,A)$-Lie-Rinehart $(L,a)$ \cite{Moerdijk}. If $B$ is a unital associative $\Bbbk$-algebra, we denote by $B_{\mathrm{Lie}}$ the algebra $B$ regarded as a Lie algebra with the commutator bracket. The universal enveloping algebra $\mathfrak{U}(L)$ solves the following problem: for every $\Bbbk$-algebra homomorphism $i\colon A \to B$, and every morphism of $\Bbbk$-Lie algebras $j\colon L \to B_{\mbox{\tiny Lie}}$ such that for all $\alpha\in A$ and $x\in L$
   \begin{equation}\label{conds}
     i(\alpha)\,j(x) = j(\alpha x)\quad \text{and} \quad [j(x),i(\alpha)] =i( a(x)(\alpha))\,,
   \end{equation}
   there is a unique morphism of $A$-modules $\mathfrak{U}(L) \to B$ such that the diagrams
   \begin{equation}
     \xymatrix{
       L \ar[r]^j \ar[d] & B \\
       \mathfrak{U}(L) \ar[ur] } 
     \qquad
     \xymatrix{
       A \ar[r]^i \ar[d] & B \\
       \mathfrak{U}(L) \ar[ur] } 
   \end{equation}
   commute (here $B$ is an $A$-module via the map $i$).

   By Proposition \ref{mapexists} we have a map $\jmath_L \colon L_{A,S} \to  \widetilde{\operatorname{Ass}}_{A,S}$, which together with $\imath_A\colon A\to \widetilde{\operatorname{Ass}}_{A,S}$ satisfy the relations~\eqref{conds}. Therefore there is a unique morphism $u\colon \mathfrak{U}(L_{A,S})\to \widetilde{\operatorname{Ass}}_{A,S}$. To prove that $u$ is an isomorphism, we show that $\widetilde{\operatorname{Ass}}_{A,S}$ is itself universal with respect to the property just recalled.

   To this end, let $\imath_B\colon A \to B$  be a $\Bbbk$-algebra morphism and let $\jmath_B\colon L_{A,S}\to B_{\mathrm{Lie}}$ be a $\Bbbk$-Lie algebra morphism satisfying~\eqref{conds}. $\jmath_B$ has an underlying set map
   $\jmath_0\colon S\to B$ which we combine into the set map
   \(
     (\imath_B,\jmath_0) \colon A \sqcup S \to B\,.
   \)
   Consider the diagram:
   \begin{equation}
     \xymatrix@C+1pc{
       A\sqcup S \ar[r]^{(\imath_B,\jmath_0)} \ar[d] & B \\
       \operatorname{Ass}_{\Bbbk, A\sqcup S} \ar[ur]_{\tilde v} \ar[d] \\
       \widetilde{\operatorname{Ass}}_{A,S}  \ar@{.>}[uur]_v
     }
   \end{equation}
   The upper solid triangle results from the universal property of the free associative $\Bbbk$-algebra, and it is easily verified that $\tilde v$ vanishes on the relations~\eqref{J}, so $J\subseteq \Kernel \tilde v$, and the map $v$ exists. Evidently, we have $\imath_B=v\circ \imath_A$, and, by restricting to the generator in $S$ and Proposition~\ref{mapexists}, we also see that $\jmath_B = v\circ \jmath_L$, so $\widetilde{\operatorname{Ass}}_{A,S}$ satisfies the universal property, as wanted.
 \end{proof}

 For any Lie-Rinehart algebra $L$, the left augmentation ideal of $\mathfrak{U}(L)$ is $K = \Kernel (\epsilon)$, where $\epsilon\colon \mathfrak{U}(L)\to A$ is defined by $\epsilon(u) = \bar a (u)(1)$, and $\bar a$ is the extension of the anchor map of $L$ to its enveloping algebra. In effect, $\bar a$ is the unique map determined by the universal property of $\mathfrak{U}(L)$ relative to the morphism provided by the anchor map: $a \colon L\to \Der_{\Bbbk}(A)\subset \End_{\Bbbk}(A)$. Note that $\epsilon$ is not a homomorphism, but it satisfies the identity $\epsilon(uv) = \epsilon(u\, \epsilon(v))$, for all $u,v\in \mathfrak{U}(L)$ \cite{Moerdijk}. $K$ is generated by the image of $L$ in $\mathfrak{U}(L)$, and $\mathfrak{U}(L)/K \simeq A$.

 Let us denote by $K_{A,S}$ the augmentation ideal for $L = L_{A,S}$.\footnote{The following result is not used elsewhere in this paper. We record it here for the sake of completeness.}
 
 \begin{prop}
   $K_{A,S}$ is a free $U(L_{A,S} )$-module. As a consequence the cohomology groups \allowbreak $H^i(L_{A,S} ;M)$ vanish for $i \ge 2$ for any representation  $M$ of $L_{A,S}$.
 \label{freevanish}
\end{prop}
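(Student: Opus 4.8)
The plan is to show that the augmentation ideal $K_{A,S}$ is free as a left $\mathfrak{U}(L_{A,S})=\widetilde{\operatorname{Ass}}_{A,S}$-module, on a basis indexed by (a basis of) $S$, and then to deduce the cohomology vanishing by using the short exact sequence $0\to K_{A,S}\to \mathfrak{U}(L_{A,S})\to A\to 0$. Granting freeness, the long exact sequence for $\Ext_{\mathfrak{U}(L_{A,S})}(-,M)$ attached to this short exact sequence, combined with the isomorphism $H^i(L_{A,S};M)\simeq \Ext^i_{\mathfrak{U}(L_{A,S})}(A,M)$ (the Lie--Rinehart analogue of the identification recalled in Section~\ref{sec:lie-algebr-cohom}), gives $H^i(L_{A,S};M)\simeq \Ext^{i-1}_{\mathfrak{U}(L_{A,S})}(K_{A,S},M)$ for $i\ge 1$; and $\Ext^{\ge 1}$ of a free module vanishes. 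So everything reduces to the freeness statement.

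To prove freeness I would work with the filtration $G^\bullet\widetilde{\operatorname{Ass}}_{A,S}$ from~\eqref{G-filt}, whose associated graded is the free (noncommutative) $A$-algebra $\operatorname{Ass}_{A,S}=A\otimes_\Bbbk\operatorname{Ass}_{\Bbbk,S}$, i.e.\ the tensor algebra on the free $A$-module $A\otimes_\Bbbk V_S$. First fix a basis $\{s\}_{s\in S'}$ of $V_S$ (one may simply take $S'=S$). On the graded side $\operatorname{Ass}_{A,S}$ the augmentation ideal is spanned by words ending in at least one generator, and the ``place all $S$-generators to the right'' normal form of Remark~\ref{rem-G-filt} shows that $\operatorname{Ass}_{A,S}$, as a left module over itself, has its augmentation ideal free on the generators $\{s\}_{s\in S'}$: every element of positive $G$-degree is uniquely $\sum_{s} w_s\cdot s$ with $w_s\in \operatorname{Ass}_{A,S}$, because the tensor algebra on a free module is a free left module on the monomials and grouping by last letter is a bijection. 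Then one lifts: let $\bar s\in K_{A,S}$ be the image of $s\in S'$ under $\jmath_L\colon L_{A,S}\to \widetilde{\operatorname{Ass}}_{A,S}$. I claim the map $\bigoplus_{s\in S'}\widetilde{\operatorname{Ass}}_{A,S}\to K_{A,S}$, $(u_s)\mapsto \sum_s u_s\,\bar s$, is an isomorphism. Using that it is a filtered map (right multiplication by $\bar s$ raises $G$-degree by one) whose associated graded map is the isomorphism just exhibited on $\gr^G$, and that the $G$-filtration is exhaustive and bounded below, a standard filtered-to-graded argument ($\gr$ iso $+$ exhaustive, separated filtration $\Rightarrow$ iso) yields the result. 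The key relation making the normal form unambiguous, hence making the lift well-defined, is exactly $s\alpha-\alpha s=a_S(s)(\alpha)$, i.e.\ the relation~\eqref{J} defining $\widetilde{\operatorname{Ass}}_{A,S}$.

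The main obstacle I anticipate is the bookkeeping in the filtered lifting step: one must be careful that the $G$-filtration on $K_{A,S}$ (the restriction of $G^\bullet\widetilde{\operatorname{Ass}}_{A,S}$) really has associated graded equal to the augmentation ideal of $\gr^G\widetilde{\operatorname{Ass}}_{A,S}=\operatorname{Ass}_{A,S}$, and that the ``last letter'' decomposition is genuinely unique at the graded level — this is where freeness of the tensor algebra as a one-sided module over itself, together with freeness of $A\otimes_\Bbbk V_S$ over $A$, is used. A secondary point is to make sure the identification $H^i(L_{A,S};M)\simeq\Ext^i_{\mathfrak{U}(L_{A,S})}(A,M)$ and the dimension-shift via $K_{A,S}$ are stated over the right ring with the right (left vs.\ right) module structures; since the paper has already invoked the sheaf-theoretic version of these facts, I would simply cite \cite{Rinehart63,Ugo-derived,Moerdijk} for the affine/Lie--Rinehart case. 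Everything else — that $\bar s$ lies in $K_{A,S}$, that these elements generate it (clear since $K_{A,S}$ is generated by the image of $L_{A,S}$ and $L_{A,S}=A\otimes_\Bbbk\operatorname{Lie}_{\Bbbk,S}$ is generated as a left $\widetilde{\operatorname{Ass}}_{A,S}$-module by $S$ using the bracket relations), and the $\Ext$ vanishing for free modules — is routine.
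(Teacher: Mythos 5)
Your proposal is correct and follows essentially the same route as the paper: both reduce to showing that right multiplication by the generators $S$ gives an isomorphism from a free $\widetilde{\operatorname{Ass}}_{A,S}$-module onto $K_{A,S}$, using the $G$-filtration and the normal form of Remark~\ref{rem-G-filt} (the paper argues injectivity by passing to the leading term in $\gr^G=\operatorname{Ass}_{A,S}$ and noting the rightmost letters are distinct, which is exactly your ``gr iso plus exhaustive, bounded-below filtration'' argument unpacked), and then both conclude by dimension-shifting along $0\to K_{A,S}\to\mathfrak U(L_{A,S})\to A\to 0$ via $H^i(L_{A,S};M)\simeq\Ext^i_{\mathfrak U(L_{A,S})}(A,M)$. (Only a cosmetic caveat: the shift $H^i\simeq\Ext^{i-1}(K_{A,S},M)$ holds for $i\ge 2$, which is all that is needed.)
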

 
\begin{proof}
  The ideal $K_{A,S}$ is generated by the image of $L_{A,S}$. Therefore $K_{A,S}$ is generated by $S$. More directly, recall that in $\widetilde{\operatorname{Ass}}_{A,S}$ we can unambiguously write $u$ of degree $n$ as
  \[
    u = 
    \sum_{i : \lvert i\rvert\leq n} s_i \,\alpha_i\,,
  \]
  where $\alpha_i\in A$, $i$ is a multiindex, and $s_i = s_{i_1}\dots s_{i_k}$. Therefore $\epsilon(u)=0$ precisely when $u$ has the form $u = \sum_{s\in S}u_s\,s$, $u_s\in \cramped{\widetilde{\operatorname{Ass}}_{A,S}}$, i.e.\ $u$ belongs to $K_{A,S}$.
  Thus $K_{A,S}$ is the image of the free left  $\widetilde{\operatorname{Ass}}_{A,S}$-module generated by $S$ under the map
  $\sum_{s\in S}u_s\otimes s\mapsto \sum_{s\in S}u_s\,s$.

  Suppose $\sum_{s\in S}u_s\,s=0$. By Remark~\ref{rem-G-filt}, and the considerations on the $G$-filtration~\eqref{G-filt} preceding it, we can assume the terms are ordered as discussed there, namely with all the generators from $S$ to the right, and as consequence we can also assume the $u_s$ to be homogeneous of degree $n$ in the number of generators in $S$, in effect projecting to the graded algebra $\operatorname{Ass}_{A,S}$. We have
  \[
    0 = \sum_{s\in S} u_s \, s = \sum_{s\in S, i: \lvert i\rvert = n} \alpha_s s_{i_1}\dots s_{i_n} \, s \,.
  \]
  Since the rightmost letters are all different, and $\operatorname{Ass}_{A,S}$ is free as an $A$-module, this implies that all $\alpha_s=0$, and so $u_s=0$.
  
  By applying the functor $\Hom_{\mathfrak{U}(L_{A,S})}(-,M)$ to the resolution 
  \begin{equation}  0 \to K_{A,S} \to  \mathfrak{U}(L_{A,S}) \to A \to 0
    \label{shortres}
  \end{equation}
  of $A$ by free $\mathfrak{U}(L_{A,S})$-modules, one gets the second claim; one uses the isomorphism $H^i(L_{A,S}, -) \simeq \Ext^i_{\mathfrak{U}(L_{A,S}) }(A,-)$ as functors $\operatorname{Rep}(L_{A,S})\to \mathbf{Vect}_\Bbbk$ \cite{Rinehart63,Ugo-derived}. 
\end{proof}
 
\begin{thm} \label{freequot} Every $(\Bbbk,A)$-Lie-Rinehart algebra $L$ is a quotient of the universal free $(\Bbbk,A)$-Lie-Rinehart algebra $L_{A,S}$ over some set $S$. \label{univ}
\end{thm}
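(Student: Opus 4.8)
The plan is to realize $L$ as a quotient by choosing for $S$ any set of generators of $L$ as an $A$-module --- for instance $S=L$ itself, viewed merely as a set --- equipped with the anchor map $a_S\colon S\to\Der_\Bbbk(A)$ obtained by restricting $a_L$ to $S$. First I would check that this data is admissible for the construction of Section~\ref{freeLR}: by the universal property of the free $\Bbbk$-Lie algebra over a set, the set map $a_S$ extends uniquely to a $\Bbbk$-Lie algebra morphism $\operatorname{Lie}_{\,\Bbbk,S}\to\Der_\Bbbk(A)$, so the hypothesis needed to form $L_{A,S}=A\otimes_\Bbbk\operatorname{Lie}_{\,\Bbbk,S}$ is automatically satisfied, and the natural map $S\to L_{A,S}$ is available.

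Next I would apply Proposition~\ref{mapexists} to the inclusion $f\colon S\hookrightarrow L$. By construction $a_L\circ f=a_S$, so the compatibility triangle commutes and the proposition produces a Lie-Rinehart algebra morphism $g\colon L_{A,S}\to L$ with $g\circ(S\to L_{A,S})=f$; explicitly $g(\alpha\otimes\xi)=\alpha\,\tilde g(\xi)$, where $\tilde g\colon\operatorname{Lie}_{\,\Bbbk,S}\to L$ is the Lie algebra map induced by $f$ through $i_S$.

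Finally I would verify that $g$ is surjective: its image is an $A$-submodule of $L$, and it contains $g(1\otimes i_S(s))=f(s)=s$ for every $s\in S$, hence the $A$-span of $S$, which is all of $L$ by the choice of $S$. Therefore $L\cong L_{A,S}/\Kernel g$, as claimed.

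The argument is essentially a formal consequence of the universal property recorded in Proposition~\ref{mapexists}, so I do not expect a genuine obstacle. The only points deserving a moment's attention are the admissibility check in the first step (which is immediate from freeness of $\operatorname{Lie}_{\,\Bbbk,S}$) and the existence of an $A$-module generating set; taking $S=L$ disposes of the latter entirely, at the cost of a highly non-minimal presentation.
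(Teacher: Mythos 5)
Your proof is correct and follows essentially the same strategy as the paper's: present $L$ as the image of the free object attached to a generating set, using the universal property to produce the comparison morphism. The only genuine difference is the choice of generating set and, correspondingly, how surjectivity is obtained. The paper takes $S$ to be a set of generators of $L$ as a $\Bbbk$-Lie algebra, so that $\operatorname{Lie}_{\,\Bbbk,S}\to L$ is already surjective and surjectivity of $L_{A,S}=A\otimes_\Bbbk\operatorname{Lie}_{\,\Bbbk,S}\to L$ is immediate; you take $S$ to be a set of $A$-module generators (e.g.\ $S=L$ itself) and deduce surjectivity from the fact that the image of $g$ is an $A$-submodule containing $S$. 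Both are valid, and each buys a little something: the paper's choice makes surjectivity a one-line consequence of the surjection of free Lie algebras, while yours permits a smaller $S$ when $L$ is finitely generated over $A$, routes the construction of the morphism through Proposition~\ref{mapexists} rather than an ad hoc verification, and correctly notes that the admissibility condition on $a_S$ is vacuous by freeness of $\operatorname{Lie}_{\,\Bbbk,S}$ (in agreement with the paper's own footnote).
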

\begin{proof} If we look at $L$ as a Lie algebra over $\Bbbk$, there is a surjective Lie algebra morphism  $f\colon\operatorname{Lie}_{\,\Bbbk,S}\to L$ for some set $S$.
We define $a_S\colon   \operatorname{Lie}_{\,\Bbbk,S}\to \Der_\Bbbk( A)$ as $a_S = a \circ f$, and use this to define a free Lie algebroid
$L_{A,S}=A\otimes \operatorname{Lie}_{\,\Bbbk,S}$, with a naturally defined map $\tilde f \colon L_{A,S} \to L$. One easily checks that this an $A$-linear morphism which is also a morphism of $\Bbbk$-Lie algebras, and satifies $a\circ \tilde f = a_S \colon L_{A,S}\to \Der_\Bbbk (A)$.
\end{proof}

\begin{remark}  \label{functorial} (A functorial construction) The previous construction of free Lie-Rinheart algebras
has an equivalent description in terms of adjoint functors \cite{casas-ladra-pira,Kapranov-free}.
Let $\mathbf{Vect}_\Bbbk$ and  $\mathbf{Lie}_\Bbbk$ be the categories of $\Bbbk$-vector spaces and 
$\Bbbk$-Lie algebras, respectively. Then the free Lie algebra functor $\mathbf{Flie}_\Bbbk\colon
\mathbf{Vect}_\Bbbk\to \mathbf{Lie}_\Bbbk$ is the left adjoint to the obvious forgetful functor
$\mathbf{Lie}_\Bbbk\to\mathbf{Vect}_\Bbbk$. 
If we consider the categories 
$\mathbf{Vect}_\Bbbk/\operatorname{Der}_\Bbbk(A)$ and $\mathbf{Lie}_\Bbbk/\operatorname{Der}_\Bbbk(A)$ of pairs $(V,b)$, $(L,a)$ respectively, with $b\colon V\to \operatorname{Der}_\Bbbk(A)$ a linear morphism, and $a\colon L\to \operatorname{Der}_\Bbbk(A)$ a morphism of Lie algebras,
we obtain a functor
\begin{equation}\label{functor} \mathbf{Vect}_\Bbbk/\operatorname{Der}_\Bbbk(A) \to \mathbf{Lie}_\Bbbk/\operatorname{Der}_\Bbbk(A)\end{equation}
which is again left adjoint to the obvious forgetful functor. Composing this
with the tensor product functor $A \otimes - $ we get the free Lie-Rinehart functor
\begin{equation}\label{functorLR} \mathbf{FreeLR} \colon \mathbf{Vect}_\Bbbk/\operatorname{Der}_\Bbbk(A) \to \mathbf{LR}_A \end{equation}
(where $ \mathbf{LR}_A$ is the category of $(\Bbbk,A)$-Lie-Rinehart algebras).

Finally, let $\mathbf{Set}/\operatorname{Der}_\Bbbk(A)$ be the category of pairs $(S,a_S)$,
where $S$ is a set,  and $a_S$  map $a_S\colon S \to \Der_\Bbbk (A)$ such that the
induced map $\operatorname{Lie}_{\,\Bbbk,S} \to \Der_\Bbbk (A)$ is 
a morphism of $\Bbbk$-Lie algebras. By taking the free vector space over $S$ this gives a functor
$$\mathbf{Set}/\operatorname{Der}_\Bbbk(A) \to \mathbf{Vect}_\Bbbk/\operatorname{Der}_\Bbbk(A).$$
By composing this functor with the functor \eqref{functorLR} one obtains the functor 
$(S,a_S) \mapsto L_{A,S}$  we implicitly defined  in Section \ref{freeLR}.
\end{remark}

\subsection{Free Lie algebroids}
As all constructions in the previous sections are functorial, they can be sheafified. So, if $\cS$ is a sheaf of sets on a scheme $X$, we can at first construct a sheaf $\Lie_{\,\Bbbk,\cS}$ of
$\Bbbk_X$-Lie algebras, by taking the sheaf associated to the presheaf 
whose space of sections  over an open subset $U\subset X$ is the free Lie algebra over the set $\cS(U)$. Let us assume that 
$a_{\cS}\colon \cS\to\Theta_X$ is a morphism of sheaves of sets such that the induced morphism
$a_{\cS}\colon \Lie_{\,\Bbbk,\cS}\to \Theta_X$ is a morphism of sheaves of $\Bbbk$-algebras. Then the $\cO_X$-module
$\cO_X \otimes_\Bbbk \operatorname{Lie}_{\Bbbk,\cS}$,
using the construction of the transformation Lie-algebroid at the end of Section \ref{BenjiVolodya}, becomes a sheaf of Lie-Rinehart algebras. This defines a sheaf $\calL_{\cS}$ on $X$ which is a  Lie algebroid. We call it the \emph{free  Lie algebroid over $\cS$} (The choice of the scheme $X$ is understood. Moreover, although we do not record the choice of the morphism $a_{\cS}\colon \cS\to\Dder_\Bbbk\cA$ in the notation, we should remember that 
$\calL_{\cS}$ depends on it).

Theorem \ref{univ} immediately implies

\begin{corol} Every  Lie algebroid over $X$ is the quotient of the   
 free  Lie algebroid $\calL_{\cS}$ for some sheaf of sets $\cS$ on $X$
and some morphism of sheaves of sets $a_{\cS}\colon \cS\to\Theta_X$.
\label{freequot-cor}
\end{corol}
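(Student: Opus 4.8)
The plan is simply to sheafify the proof of Theorem~\ref{univ}. Let $\cC$ be a Lie algebroid on $X$, with anchor $a\colon\cC\to\Theta_X$, and take for $\cS$ the sheaf of sets underlying $\cC$ (that is, $\cC$ with its $\cO_X$-module structure and its bracket forgotten), equipped with the morphism of sheaves of sets $a_\cS\colon\cS\to\Theta_X$ which is $a$ with its $\cO_X$-linearity forgotten. First I would verify that $(\cS,a_\cS)$ is an admissible datum for the free Lie algebroid construction of Section~\ref{freealg}, i.e.\ that the induced morphism $\Lie_{\,\Bbbk,\cS}\to\Theta_X$ is a morphism of sheaves of $\Bbbk$-Lie algebras: over each open $U$ the universal property of the free Lie algebra over the set $\cS(U)=\cC(U)$ turns the set map $a\colon\cC(U)\to\Theta_X(U)$ into a $\Bbbk$-Lie algebra morphism $\operatorname{Lie}_{\,\Bbbk,\cC(U)}\to\Theta_X(U)$, these are compatible with restrictions and hence sheafify to the required morphism — and, as the footnote in Section~\ref{freeLR} notes, for a set this condition is in any case automatic once the underlying set map is given. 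With this datum we form the free Lie algebroid $\calL_\cS=\cO_X\otimes_\Bbbk\Lie_{\,\Bbbk,\cS}$.

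Next I would construct the comparison morphism $\tilde f\colon\calL_\cS\to\cC$. Over each open $U$, Proposition~\ref{mapexists} applied to the $(\Bbbk,\cO_X(U))$-Lie-Rinehart algebra $\cC(U)$ and to the identity set map $\cC(U)\to\cC(U)$ — which is anchor-compatible precisely because $a_\cS$ was chosen to be $a$ — yields a unique Lie-Rinehart morphism $g_U\colon L_{\cO_X(U),\cC(U)}\to\cC(U)$. By uniqueness the $g_U$ commute with restriction maps, assembling into a morphism of presheaves of Lie-Rinehart algebras which, $\cC$ being a sheaf, factors uniquely through the sheafification $\calL_\cS$ to give a morphism of Lie algebroids $\tilde f\colon\calL_\cS\to\cC$. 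This $\tilde f$ is surjective: over any open $V$ and any section $t$ of $\cC$ one has $g_V(1\otimes t)=t$ (writing $t$ also for its image in $\Lie_{\,\Bbbk,\cS}$), so $\tilde f$ is already surjective on sections over every open, and a fortiori as a morphism of sheaves.

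Surjectivity of $\tilde f$ gives $\cC\simeq\calL_\cS/\Kernel\tilde f$, and since the kernel of a morphism of Lie algebroids carries the trivial anchor — as recalled just after the definition of a morphism of Lie algebroids in Section~\ref{cohom} — $\Kernel\tilde f$ is a sheaf of $\cO_X$-Lie algebras, hence an ideal of $\calL_\cS$. Thus $\cC$ is a quotient Lie algebroid of the free Lie algebroid $\calL_\cS$, which is the assertion.

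The only point demanding genuine care is the sheafification bookkeeping: one must check that $U\mapsto\operatorname{Lie}_{\,\Bbbk,\cS(U)}$, $U\mapsto L_{\cO_X(U),\cC(U)}$ and $U\mapsto g_U$ are natural in $U$, so that they sheafify compatibly and $\calL_\cS$ is indeed the sheafification of $U\mapsto L_{\cO_X(U),\cC(U)}$, and that surjectivity descends to the associated sheaves — which here is harmless, as it holds section-wise. One should also note that no coherence of $\calL_\cS$ is being asserted; as is already implicit in Section~\ref{freealg}, a free Lie algebroid over an arbitrary sheaf of sets need not be coherent, and this is immaterial for the statement.
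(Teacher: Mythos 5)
Your proof is correct and follows essentially the paper's route: the paper derives the corollary in one line from Theorem~\ref{univ} together with the remark that all the free Lie--Rinehart constructions are functorial and hence sheafify, and your argument --- taking $\cS$ to be the underlying sheaf of sets of $\cC$ with $a_\cS=a$, applying Proposition~\ref{mapexists} sectionwise, and checking naturality in $U$ so that the comparison morphism and its surjectivity descend to the associated sheaves --- is precisely the detailed working-out of that deduction, with the canonical choice of generating sheaf that makes it go through. Your closing remarks on the sheafification bookkeeping and on the irrelevance of coherence of $\calL_\cS$ are the right points to flag and are handled correctly.
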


Also the functorial construction of Section \ref{functorial} immediately generalizes to Lie algebroids.

\begin{remark}
  If the sheaf of sets $\cS$ is locally constant with a finite stalk, the sheaf of Lie algebras 
$\calL\!\text{\em ie}_{\,\Bbbk,\cS}$ is a locally free $\Bbbk_X$-module of finite rank. As a result,
the free Lie algebroid $\calL_{\cS}$  is a locally free $\cO_X$-module of finite rank. Corollary \ref{freequot-cor} can be strengthened to the claim that every locally free Lie algebroid of finite rank is the quotient of a free Lie algebroid which is a locally free $\cO_X$-module of finite rank.
\end{remark}

\bigskip
\section{Abelian extensions of  Lie algebroids} \label{abelian}
\subsection{The extension problem} Let $\cQ$ be a locally free Lie algebroid on a scheme $X$, and $\calL$ a bundle of Lie algebras over 
$X$, i.e., a locally free Lie algebroid with vanishing anchor.
Fix a morphism 
$$\alpha\colon \cQ \to \Out( \calL),$$
where $\Out(\calL)$ is the sheaf of outer derivations of $\calL$ (note
that $\Out(\calL)$  has a natural structure of Lie algebroid). 
Such a morphism gives the  center $Z(\calL)$ of $\calL$   a $\cQ$-module structure,
so that we may consider the hypercohomology $\mathbb H^\bullet(\cQ;Z(\calL))$ (we drop the dependence on
the morphism $\alpha$ from the notation), and its truncated version $\mathbb H^\bullet(\cQ;Z(\calL))^{(1)}$.

\subsection{Abelian extensions}
Consider an extension of Lie algebroids as in \eqref{extalg}, with $\calL $ abelian. As in Section \ref{Derfunct},
we give $\calL$ a structure of $\cQ$-, $\cE$- and $\calL$-module. Note that $\Der(\calL,\calL) = \End_{\cO_X}(\calL)$,
so that the morphism $\delta$ in \eqref{Derseq} yields a morphism
\begin{equation} \End_{\cO_X}(\calL) \to \mathbb H^2(\cQ;\calL)^{(1)}\,.
\end{equation}
Applying this morphism to the identity of $\calL$ we obtain a  set-theoretic map
(classifying morphism)
\begin{equation}\label{map} \Ext_{\mbox{\tiny LA}} (\cQ;\calL) \to \mathbb H^2(\cQ;\calL)^{(1)}\end{equation}
where $\Ext_{\mbox{\tiny LA}} (\cQ;\calL)$ is the set of equivalence classes of extensions of $\cQ$ by $\calL$ compatible with the morphism $\alpha$, with the usual equivalence relation. 

In the abelian case, the problem of extending a Lie algebroid $\cQ$ by a sheaf of abelian algebras $\calL$  in unobstructed. The   problem 
of classifying the extensions may be cast in the general form of the theory developed in the paper \cite{vanosdol}. There it is shown that extensions are classified by the first derived functor of the functor $\Der(\cQ;-)$ applied to $\calL$. With the identification of these derived 
functor with the shifted functors $\mathbb H^\bullet(\cQ,-)^{(1)}$ (Proposition \ref{Der&Trunc}) we obtain Theorem \ref{class} for the abelian case.

\bigskip
\section{Nonabelian extensions} \label{nonabelian}

\subsection{The obstruction class} 
As we already discussed, in the nonabelian case
the problem of finding an extension of $\cQ$ by $\calL$ inducing a given morphism $\alpha\colon \cQ \to \Dder(Z(\calL))$ is obstructed by a class in the group  $ \mathbb H^3(\cQ;Z(\calL))^{(1)}$.
This obstruction class was already built in \cite{BMRT} using \v Cech resolutions. Here we want to give an more abstract construction, using the formalism so far developed in this paper. What we are going to do is essentially to generalize the treatment in \cite{knopf} to Lie algebroids. 

As we saw, $\cQ$ can be represented as a quotient of a free Lie algebroid, which we denote $\cF$. The epimorphism $\cF \to \cQ$ induces an epimorphism $\mathfrak U(\cF)\to \mathfrak U(\cQ)$. Let 
$\cK$ be the corresponding kernel, so that we have
$$ 0 \to \cK \to \mathfrak U(\cF)\to \mathfrak U(\cQ) \to 0 .$$
Moreover we denote by $\cJ$ the kernel of the augmentation morphism $\mathfrak U(\cF) \to \cO_X$. Note that $\cK$ injects into $\cJ$, and
$\cK\!\!\cJ$ injects to $\cK$. 
If we denote
$$ {\widetilde \cK}\,^i = \cK^i/\cK^{i+1},\qquad\widetilde \cJ\,^i = \cK^i\cJ / \cK^{i+1}\cJ,\quad\text{for}\ i=0,\dots$$
(with $ \cK^0 = \mathfrak U(\cF)$), the sheaves ${\widetilde \cK}\,^i$, $\widetilde \cJ\,^i$ are locally free $\cO_X$-modules
with a $\mathfrak U(\cQ)$-module structure.
The previous injections define morphisms $ {\widetilde \cK}\,^i \to  {\widetilde \cJ}\,^{i-1}$
and  $ {\widetilde \cJ}\,^i \to  {\widetilde \cK}\,^{i}$. Moreover, there is a morphism $\widetilde  \cJ\,^0 \to \mathfrak U(\cQ)$.

Let $\mathcal X$ be a sheaf of sets, and $a_\cF\colon\mathcal X \to \Dder_\Bbbk(\cO_X)$ a morphism such that the associated free Lie algebroid is isomorphic to $\cF$. Analogously, let   $\mathcal Y$ be a sheaf of sets whose associated sheaf of free Lie algebras is isomorphic to $\cT$. 
Then $\mathcal Y$ generates $\cK$ as a sheaf of free $\mathfrak U(\cQ)$-algebras. Moreover, products of $i$ sections of $\mathcal Y$ mod
$\cK^{i+1}$ generate  ${\widetilde \cK}\,^i$, and products of $i$ copies  of $\mathcal Y$ times sections of $\mathcal X$ mod $\cK^{i+1}\!\!\cJ$
generate $\widetilde\cJ\,^i$.
 
\begin{lemma}
The sequence
\begin{equation}\label{freeres} \dots \to \widetilde \cK\,^2\to \widetilde \cJ\,^1 \to \widetilde \cK\,^1\to\widetilde  \cJ\,^0 \to \mathfrak U(\cQ)
\to \cO_X \to 0 .
\end{equation}
is  a resolution of $\cO_X$ by locally  free $\mathfrak U(\cQ)$-modules.
\end{lemma}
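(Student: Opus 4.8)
The plan is to exhibit \eqref{freeres} as the complex canonically attached to a single descending filtration of $\mathfrak U(\cF)$, which makes exactness automatic, and then to check the local freeness of its terms. Assemble the ideals into the chain of left $\mathfrak U(\cF)$-submodules
\[
  \mathfrak U(\cF)\ \supseteq\ \cJ\ \supseteq\ \cK\ \supseteq\ \cK\cJ\ \supseteq\ \cK^2\ \supseteq\ \cK^2\cJ\ \supseteq\ \dotsb,
\]
writing $M_{-1}=\mathfrak U(\cF)$, $M_0=\cJ$, $M_{2i+1}=\cK^{i+1}$ and $M_{2i+2}=\cK^{i+1}\cJ$ for $i\ge0$; all the inclusions are immediate from $\cK\subseteq\cJ$ and from the $\cK^i$ being two-sided ideals. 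With this bookkeeping the degree-$n$ term of \eqref{freeres} is exactly $M_{n-1}/M_{n+1}$ (so $\mathfrak U(\cQ)=M_{-1}/M_1$ and $\cO_X=M_{-1}/M_0$), and each of the maps $\widetilde\cK^{i}\to\widetilde\cJ^{\,i-1}$, $\widetilde\cJ^{\,i}\to\widetilde\cK^{i}$, $\widetilde\cJ^{\,0}\to\mathfrak U(\cQ)$, $\mathfrak U(\cQ)\to\cO_X$ is the ``include the numerator and project'' morphism $M_{n-1}/M_{n+1}\to M_{n-2}/M_n$ induced by $M_{n-1}\hookrightarrow M_{n-2}$ --- precisely how those morphisms were introduced above. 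Granting this identification, that consecutive maps compose to zero and that the complex is exact is purely formal: for $n\ge1$, $\Kernel\bigl(M_{n-1}/M_{n+1}\to M_{n-2}/M_n\bigr)=M_n/M_{n+1}=\Image\bigl(M_n/M_{n+2}\to M_{n-1}/M_{n+1}\bigr)$, and at the right-hand end $\Kernel(\mathfrak U(\cQ)\to\cO_X)=\cJ/\cK=\Image(\widetilde\cJ^{\,0}\to\mathfrak U(\cQ))$, with $\mathfrak U(\cQ)\to\cO_X$ surjective. (This is the Lie-algebroid incarnation of the Gruenberg resolution attached to a group presentation.) Thus there is essentially no computation at this step; the content of the lemma is the local freeness of the terms.

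For that I would pass to an affine cover, where all the sheaves are enveloping algebras and augmentation/relation ideals of honest Lie--Rinehart algebras. By the argument in the proof of Proposition \ref{freevanish}, $\cJ$ is free over $\mathfrak U(\cF)$ on $\mathcal X$; and since $\cT=\Kernel(\cF\to\cQ)$ is free on $\mathcal Y$, the same proposition applied to $\cT$ gives that the augmentation ideal $\cJ_\cT$ of $\mathfrak U(\cT)$ is free over $\mathfrak U(\cT)$ on $\mathcal Y$. As $\mathfrak U(\cF)$ is free --- hence flat --- as a right $\mathfrak U(\cT)$-module (Rinehart's PBW theorem \cite{Rinehart63}, $\cT$ being a sub-Lie--Rinehart algebra of $\cF$ with locally free quotient), tensoring $0\to\cJ_\cT\to\mathfrak U(\cT)\to\cO_X\to0$ along $\mathfrak U(\cF)\otimes_{\mathfrak U(\cT)}(-)$ identifies $\cK$ (the two-sided ideal generated by $\cT$, which equals $\mathfrak U(\cF)\cdot\cT$) with $\mathfrak U(\cF)\otimes_{\mathfrak U(\cT)}\cJ_\cT$, so $\cK$ is free over $\mathfrak U(\cF)$ on $\mathcal Y$ --- this is precisely the assertion, recalled before the statement, that $\mathcal Y$ generates $\gr_\cK\mathfrak U(\cF)$ as a free $\mathfrak U(\cQ)$-algebra (cf.\ \cite{Kapranov-free}). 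Iterating, $\cK^{i}$ is free over $\mathfrak U(\cF)$ on words of length $i$ in $\mathcal Y$, hence $\widetilde\cK^{i}=\cK^{i}\otimes_{\mathfrak U(\cF)}\mathfrak U(\cQ)$ is free over $\mathfrak U(\cQ)$; and decomposing $\cJ=\bigoplus_{x\in\mathcal X}\mathfrak U(\cF)\,x$ gives $\cK^{i}\cJ=\bigoplus_{x}\cK^{i}x$ and $\cK^{i+1}\cJ=\bigoplus_{x}\cK^{i+1}x$, so $\widetilde\cJ^{\,i}\cong\bigoplus_{\mathcal X}\widetilde\cK^{i}$ is free over $\mathfrak U(\cQ)$ as well. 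With $\mathcal X,\mathcal Y$ locally constant of finite stalk these become locally free of finite rank.

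The one delicate point --- and the main obstacle --- is this module theory of $\cK$: it rests on $\cT$ being free (a Shirshov--Witt/Nielsen--Schreier-type fact, here handed to us by the set-up) and on a Poincar\'e--Birkhoff--Witt flatness of $\mathfrak U(\cF)$ over the enveloping algebra of the sub-algebroid $\cT$. One should also beware that, the enveloping algebras being noncommutative, $\cK^{i}\cJ\cdot\cK\ne\cK^{i+1}\cJ$ in general, so $\widetilde\cJ^{\,i}$ is \emph{not} simply the base change $\cK^{i}\cJ\otimes_{\mathfrak U(\cF)}\mathfrak U(\cQ)$; it is the splitting $\cK^{i}\cJ=\bigoplus_{x\in\mathcal X}\cK^{i}x$, coming from the freeness of $\cJ$, that sidesteps this, and that is the only genuine subtlety beyond bookkeeping with the two interleaved filtrations.
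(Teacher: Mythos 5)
Your argument is correct, and it supplies considerably more than the paper does: the paper's entire proof of this lemma is the sentence ``Brute force diagram chasing.'' Your observation that \eqref{freeres} is the complex of two-step quotients $M_{n-1}/M_{n+1}$ of the single descending chain $\mathfrak U(\cF)\supseteq\cJ\supseteq\cK\supseteq\cK\cJ\supseteq\cK^2\supseteq\dotsb$, with every map induced by an inclusion, makes exactness tautological (kernel and image at each spot both equal $M_n/M_{n+1}$); this is the Gruenberg-resolution pattern and it eliminates the chase altogether. More substantially, you prove the half of the lemma that the paper does not address at all, namely the (local) freeness of the terms over $\mathfrak U(\cQ)$. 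Your route --- $\cJ$ free over $\mathfrak U(\cF)$ on $\mathcal X$ by the argument of Proposition \ref{freevanish}, then $\cK\cong\mathfrak U(\cF)\otimes_{\mathfrak U(\cT)}\cJ_\cT$ (with $\cJ_\cT$ the augmentation ideal of $\mathfrak U(\cT)$) free on $\mathcal Y$, then $\cK^i=\bigoplus_w\mathfrak U(\cF)\,w$ over words $w$ of length $i$ in $\mathcal Y$ and $\cK^i\cJ=\bigoplus_{x\in\mathcal X}\cK^i x$ --- is the standard one, and your caveat that $\widetilde\cJ\,^i$ is \emph{not} the base change $\cK^i\cJ\otimes_{\mathfrak U(\cF)}\mathfrak U(\cQ)$ is exactly right. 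Note only that the same left/right caveat applies to your passing identification $\widetilde\cK\,^i=\cK^i\otimes_{\mathfrak U(\cF)}\mathfrak U(\cQ)$, which is a statement about the right module structure; the left freeness you actually need is what your decomposition with generators placed on the right delivers, so nothing is lost. The two external inputs --- freeness of $\cT$ on $\mathcal Y$ and the Poincar\'e--Birkhoff--Witt flatness of $\mathfrak U(\cF)$ over $\mathfrak U(\cT)$, which yields $\cK=\mathfrak U(\cF)\cdot\cT$ as a free module --- are precisely what the paper buries in its unproved assertion that $\mathcal Y$ generates $\cK$ freely, and you are right to flag them rather than take them for granted, since the Lie--Rinehart PBW theorem does require the local freeness of the quotient that is part of the standing hypotheses. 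In short: same resolution, but where the paper waves at a diagram chase you give the filtration argument that makes the chase unnecessary, and you actually prove the freeness statement that carries the weight in the application \eqref{resUL}.
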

\begin{proof} Brute force diagram chasing. \end{proof}
We can again slice this exact sequence into
\begin{gather} 
 0 \to \cJ \to \mathfrak U(\cQ) \to \cO_X \to 0 \,, \\
\dots \to \widetilde \cK\,^2\to \widetilde \cJ\,^1 \to \widetilde \cK\,^1\to\widetilde  \cJ\,^0 \to  \cJ \to 0 \,. \label{slice2}
\end{gather}

Moreover, we pick a lift $\tilde\alpha\colon\cF\to\Dder (\calL)$ of $\alpha$, so that we have a commutative diagram
\begin{equation}\xymatrix{
& 0 \ar[r] &  \cT \ar[r] \ar[d]^\beta & \cF \ar[r] \ar[d]^{\tilde \alpha} & \cQ \ar[r] \ar[d]^\alpha  & 0 \\
0   \ar[r]  & Z(\calL) \ar[r] & \calL  \ar[r]  \ar[r]^{\operatorname{ad}\ \ \ \  } & \Dder(\calL )\ar[r] & 
\Out(\calL) \ar[r]  & 0 }
\label{diagknopf}\end{equation}
where $\beta$ is the induced morphism.

We define a morphism
\begin{equation}o \colon \widetilde \cJ\,^1 \to Z(\calL).\label{defo}\end{equation}
It is enough to define $o$ on an element of the type $yx$, where $x$ is a generator of $\cF$, and
$y$ is a generator of $\cT$. We let
$$o(yx) = \beta(\{x,y\} )- \tilde\alpha(x) (\beta(y)).$$
Note that if $\ell$ is section of $\calL$, then
$$ \{\beta(\{x,y\} ),\ell\} = \{\{\beta(x),\beta(y)\},\ell\} = \{\operatorname{ad}(\beta(x))(\beta(y)),\ell\} =   \{\tilde\alpha(x) (\beta(y)),\ell\} $$
so that $o$ takes values in $Z(\calL)$. 

We apply the functor $\Hom_{\mathfrak U(\cQ)}(-,Z(\calL))$ to the resolution \eqref{slice2}, obtaining, 
\begin{multline} 0 \to \Der(\cQ,Z(\calL))  \to
\Hom_{\mathfrak U(\cQ)}(\widetilde\cJ\,^0,Z(\calL))   \xrightarrow{d_1} 
\Hom_{\mathfrak U(\cQ)}(\widetilde\cK\,^1 ,Z(\calL))\\ \xrightarrow{d_2 } 
\Hom_{\mathfrak U(\cQ)}(\widetilde\cJ\,^1 ,Z(\calL)) \xrightarrow{d_3 } 
\Hom_{\mathfrak U(\cQ)}(\widetilde\cK\,^2 ,Z(\calL)) 
\to \dots 
\label{resUL}
\end{multline}
By  Section \ref{truncated}, the cohomology of this complex is isomorphic to $\mathbb H^{\bullet+1}(\cQ;Z(\calL))$.
Note also that $o$ is an element in $\Hom_{\mathfrak U(\cQ)}(\widetilde\cJ\,^1 ,Z(\calL))$.

\begin{lemma} $d_3(o) = 0$. Moreover, the cohomology class of $o$ in $\mathbb H^3(\cQ;Z(\calL))^{(1)}$ only depends on $\alpha$.
\end{lemma}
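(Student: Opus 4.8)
The statement has two parts: (i) the cocycle condition $d_3(o)=0$, and (ii) independence of the cohomology class of $o$ from the choices made (namely the free resolution $\cF\to\cQ$ and the lift $\tilde\alpha$). For (i), I would argue by a direct computation. Recall that $d_3$ is the differential in the complex \eqref{resUL} induced by dualizing the morphism $\widetilde\cK\,^2\to\widetilde\cJ\,^1$ in the resolution \eqref{slice2}; concretely, for a section $u$ of $\widetilde\cK\,^2$, which is generated by products $y_1y_2$ of two generators of $\cT$ modulo $\cK^3$, one has $(d_3 o)(y_1 y_2)$ expressed through $o$ evaluated on the image of $y_1 y_2$ under $\widetilde\cK\,^2\to\widetilde\cJ\,^1$ together with the $\mathfrak U(\cQ)$-action. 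Unwinding the definition $o(yx)=\beta(\{x,y\})-\tilde\alpha(x)(\beta(y))$ and using that $\beta$ is a morphism of $\Bbbk$-Lie algebras together with the Jacobi identity in $\Dder(\calL)$ and the fact that $\tilde\alpha$ is a Lie algebroid morphism (so $\tilde\alpha(\{x,x'\})=[\tilde\alpha(x),\tilde\alpha(x')]$ up to the failure measured by the curvature of the lift, which takes values in $\ad(\calL)$), the expression $(d_3 o)$ should collapse to a combination of terms that all lie in $\ad(\calL)$ acting trivially on $Z(\calL)$, hence vanish. The key algebraic input is that the failure of $\tilde\alpha$ to be a Lie algebroid morphism, and the failure of $o$ to be $\mathfrak U(\cQ)$-linear at the level of $\cF$, are both controlled by inner derivations, which act as zero on the center; this is exactly the mechanism in \cite{knopf} transported to the algebroid setting via the resolution \eqref{freeres}.

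\textbf{Independence from the lift $\tilde\alpha$.} Suppose $\tilde\alpha'$ is another lift of $\alpha$. Then $\tilde\alpha'-\tilde\alpha$ factors through $\ad\colon\calL\to\Dder(\calL)$, i.e.\ there is an $\cO_X$-linear map $\gamma\colon\cF\to\calL$ (adjusting $\beta$ to $\beta'=\beta+\gamma|_{\cT}$) with $\tilde\alpha'(x)=\tilde\alpha(x)+\ad(\gamma(x))$. I would compute the difference $o'-o$ directly: it is a morphism $\widetilde\cJ\,^1\to Z(\calL)$ which, on a generator $yx$, equals $\beta(\{x,y\})' - \beta(\{x,y\}) - \ad(\gamma(x))(\beta(y)) - \tilde\alpha(x)(\gamma(y))$ plus lower-order terms, and this should be exhibited as $d_2$ of an explicit element of $\Hom_{\mathfrak U(\cQ)}(\widetilde\cK\,^1,Z(\calL))$ built from $\gamma$. (The element is essentially $\gamma$ itself, composed with the projection $\calL\to Z(\calL)$ where it is defined, noting $\gamma$ restricted to $\cT$ lands in the part mapping to $Z(\calL)$; this is where one uses that $\gamma|_{\cT}$ must take values making $\beta'$ well-defined.) Hence $o'$ and $o$ differ by a coboundary.

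\textbf{Independence from the free resolution $\cF$.} Given two free resolutions $\cF_1\to\cQ$ and $\cF_2\to\cQ$, one reduces to comparing each with a common one (e.g.\ a free Lie algebroid surjecting onto the fibered product, or the coproduct $\cF_1\ast\cF_2$ in free Lie algebroids). A morphism $\cF_1\to\cF_2$ over $\cQ$ exists by the universal property (Proposition \ref{mapexists}, sheafified) and induces a morphism of the resolutions \eqref{freeres}, hence of the complexes \eqref{resUL}, under which the respective classes $o_1$, $o_2$ correspond (one checks $o_2$ pulls back to a cocycle cohomologous to $o_1$, using again that discrepancies are inner). Since any two maps of resolutions over the identity are chain homotopic, the induced map on cohomology is canonical, so the class is well-defined.

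\textbf{Main obstacle.} The genuinely laborious step is (i), the verification $d_3(o)=0$: it requires writing out the boundary map $\widetilde\cK\,^2\to\widetilde\cJ\,^1$ explicitly in terms of generators and the $\mathfrak U(\cQ)$-action, and then a somewhat intricate bookkeeping with the Jacobi identity showing all surviving terms are inner derivations. I expect this to be the "brute force diagram chasing" analogue of the lemma proof above. The independence statements are formally routine once one accepts that every choice-discrepancy is an inner derivation killed on $Z(\calL)$, but setting up the comparison of resolutions carefully (especially sheafifying the universal properties of free Lie algebroids and free enveloping algebroids) takes a little care.
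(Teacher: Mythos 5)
Your overall architecture (cocycle condition by computation on generators of $\widetilde\cK\,^2$; class-independence by exhibiting differences of cocycles as coboundaries and by comparing resolutions) is reasonable, but it diverges from the paper on both halves, and on the first half it misplaces the difficulty. In the complex \eqref{resUL} the differential $d_3$ is nothing but precomposition with the map $\widetilde\cK\,^2\to\widetilde\cJ\,^1$ from \eqref{slice2}; since $\widetilde\cK\,^2$ is generated by products $y_1y_2$ of two sections of $\cT$, the whole of $d_3(o)=0$ reduces to checking that $o(yx)=0$ when $x$ itself lies in $\cT$. There the commutativity of the left square of \eqref{diagknopf} gives $\tilde\alpha(x)=\operatorname{ad}(\beta(x))$, so the two terms of \eqref{defo} cancel outright. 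No Jacobi identity and no bookkeeping of ``surviving inner terms'' is needed; indeed your phrase about terms ``lying in $\operatorname{ad}(\calL)$ acting trivially on $Z(\calL)$'' does not quite parse for $o$, which takes values in $Z(\calL)$ rather than in derivations. So the step you single out as the main obstacle and leave unexecuted is in fact the easy one, and as written your plan for it would not assemble into a proof without first making the observation above.

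On the second half you and the paper genuinely part ways. The paper reduces to the case $\alpha=0$: then $\tilde\alpha=\operatorname{ad}\circ\hat\alpha$ for some $\hat\alpha\colon\cF\to\calL$ with $\hat\alpha|_\cT=\beta$, and $o$ is shown to vanish (this reduction implicitly uses a linearity of $[o]$ in the lift that the paper does not spell out). Your direct computation of $o'-o$ as $d_2$ of an explicit cochain is a legitimate and arguably more transparent alternative, but note that the cobounding element cannot be ``$\gamma$ composed with the projection $\calL\to Z(\calL)$'': no such projection exists, and $\gamma\vert_\cT$ takes values in $\calL$. What is true is that once $\tilde\alpha'=\tilde\alpha+\operatorname{ad}\circ\gamma$ is fixed, the new $\beta'$ is determined only up to a map $\cT\to Z(\calL)$, and it is that $Z(\calL)$-valued ambiguity, extended $\mathfrak U(\cQ)$-linearly to $\widetilde\cK\,^1$, which must be exhibited as the cobounding cochain; this needs to be stated precisely for your argument to close. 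Finally, your discussion of independence from the free presentation $\cF$, via a comparison morphism of resolutions and chain homotopy, addresses a point on which the paper's proof is silent, and is a worthwhile supplement.
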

\begin{proof}  To prove that $d_3(o) = 0$ we need to show that $o(yx)$, as in equation \eqref{defo}, is zero when both $y$ and $x$ are
sections of $\cT$. But this follows from the commutativity of the diagram \eqref{diagknopf} (that is, from the definition of $\beta$). To prove that  the cohomology class of $o$ only depends on $\alpha$ means to show that this cohomology class vanishes when $\alpha=0$. In this case,
$\tilde\alpha$ takes values in the inner derivations, i.e., there is a morphism $\alpha\colon \hat\cF\to\calL$ such that $\tilde\alpha(x)(y)=\{\hat\alpha(x),y\}$.
Moreover, $\hat\alpha_{\vert \cT} = \beta$, so that 
$$o(yx) = \hat\alpha(\{x,y\} )- \{\hat\alpha(x),\hat\alpha(y)\} = 0.$$
\end{proof}


\begin{defin} We denote by $\mathbf{ob}(\alpha)$ the cohomology class induced in  $\mathbb H^3(\cQ;Z(\calL))^{(1)}$
by $o$, and call it the {\em obstruction class} associated with $\alpha$.
\end{defin}

\begin{thm} Given a Lie algebroid $\cQ$, a bundle $\calL$ of Lie algebras over $\cO_X$,
and a morphism $\alpha\colon \cQ \to \Out( \calL)$, an extension of Lie algebroids
as in \eqref{extalg}
inducing on $Z(\calL)$ the $\cQ$-module structure given by $\alpha$ exists if and only if
 $\mathbf{ob}(\alpha)=0$.
 \label{obs}
 \end{thm}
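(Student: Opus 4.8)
The plan is to show that the two implications follow from unwinding the homological algebra set up in the previous sections. For the \emph{necessity} direction (an extension gives $\mathbf{ob}(\alpha)=0$), I would start from a given extension $0\to\calL\to\cE\to\cQ\to 0$ inducing $\alpha$, and use the surjection $\cF\to\cQ$ from a free Lie algebroid together with the freeness (Proposition \ref{mapexists}, sheafified) to produce a morphism $\cF\to\cE$ lifting $\cF\to\cQ$; composing with $\cE\to\Dder(\calL)$ (coming from the adjoint action of $\cE$ on the ideal $\calL$) gives a lift $\tilde\alpha\colon\cF\to\Dder(\calL)$ of $\alpha$ of exactly the type used to define $o$. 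The point is then that, because $\tilde\alpha$ factors through $\cE$ and $\calL$ is a genuine \emph{ideal} in $\cE$ (not merely acted upon up to inner derivations), the expression $o(yx)=\beta(\{x,y\})-\tilde\alpha(x)(\beta(y))$ is the restriction to $\cK$ of a \emph{coboundary}: the cochain $\widetilde\cK^{\,1}\to Z(\calL)$ or $\widetilde\cJ^{\,0}\to Z(\calL)$ coming from the section of $\cE\to\cQ$ has $o$ as its image under $d_2\circ d_1$-type differentials, so $[o]=0$ in $\mathbb H^3(\cQ;Z(\calL))^{(1)}$. Concretely, I would exhibit a lift $\gamma\colon\cF\to\calL$ of $\tilde\alpha$ modulo what the section of $\cE$ provides on the free generators, and check that $o=d_3'(\gamma)$ where $d_3'$ is the relevant differential in \eqref{resUL}; this is the Lie-algebroid analogue of the standard fact that a choice of set-theoretic section trivializes the nonabelian cocycle.

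For the \emph{sufficiency} direction ($\mathbf{ob}(\alpha)=0$ gives an extension), I would run the classical construction of an extension from a trivializing cochain. Since $[o]=0$, there exists $\eta\in\Hom_{\mathfrak U(\cQ)}(\widetilde\cK^{\,1},Z(\calL))$ with $d_3(\eta)=o$ (using the identification of the cohomology of \eqref{resUL} with $\mathbb H^{\bullet+1}(\cQ;Z(\calL))^{(1)}$ from Section \ref{truncated}). I would then modify the chosen lift $\tilde\alpha\colon\cF\to\Dder(\calL)$ by $\eta$ to obtain a new lift $\tilde\alpha'$ and a corresponding $\beta'\colon\cT\to\calL$ for which the obstruction cochain $o'$ vanishes identically, i.e. $\beta'(\{x,y\})=\tilde\alpha'(x)(\beta'(y))$ on generators. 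This exact identity says precisely that $\beta'\colon\cT\to\calL$ is compatible with brackets, so the pushout $\cE := (\cF\oplus_\Bbbk ?)$—more precisely the quotient of $\cF\ltimes\calL$ (using $\tilde\alpha'$ to act) by the graph $\{(y,-\beta'(y)):y\in\cT\}$—is a well-defined Lie algebroid. The anchor on $\cE$ is induced from that of $\cF$ (which kills $\cT$ after quotienting, consistently with $\calL$ having zero anchor), and one checks $0\to\calL\to\cE\to\cQ\to 0$ is exact with the induced $\cQ$-action on $Z(\calL)$ equal to $\alpha$. That the graph is an \emph{ideal} of $\cF\ltimes_{\tilde\alpha'}\calL$ is exactly where the vanishing of $o'$ and the Jacobi identity (encoded in $d_3(o')=0$ from the previous lemma) get used.

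The main obstacle I anticipate is the bookkeeping around the resolution \eqref{freeres}: making precise which products of generators of $\mathcal X$ and $\mathcal Y$ give well-defined cochains on $\widetilde\cJ^{\,1}$ and $\widetilde\cK^{\,1}$, and verifying that the Lie-algebroid (as opposed to Lie-algebra) Leibniz rule is respected by all the maps—in particular that $\tilde\alpha'$ still lands in $\Dder(\calL)$ and that the modified $\beta'$ is $\cO_X$-linear. A second, related subtlety is that $\cT$ is itself a free \emph{sheaf} of Lie algebras, so all the ``define it on generators'' arguments must be checked to sheafify, but this is handled by the functoriality noted in Remark \ref{functorial} and Corollary \ref{freequot-cor}. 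I expect the anchor compatibility to be automatic (as in the semidirect product construction recalled in the introduction, since $\calL$ and $\cT$ have vanishing anchor), so the real content is entirely in the bracket identities, which reduce—via the identification of the complex \eqref{resUL} with the truncated Chevalley–Eilenberg complex—to the single assertion $d_3(o)=0 \iff [o]$ obstructs, already half-proved in the lemma preceding the statement.
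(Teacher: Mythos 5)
Your plan is essentially the paper's proof: necessity is obtained by lifting $\cF\to\cQ$ through the given extension so that the representative cocycle of $\mathbf{ob}(\alpha)$ becomes trivial (the paper's choice $\tilde\alpha=-\operatorname{ad}\circ\gamma$ makes $o$ vanish identically rather than merely exhibiting it as a coboundary), and sufficiency is obtained by using the trivializing cochain to correct $\beta$ so that \eqref{vanishes} holds and then taking the quotient of $\calL\rtimes_{\tilde\alpha}\cF$ by the graph of $\beta'$, which is exactly the paper's $\mathcal H$. Two cosmetic slips only: the trivializing cochain $\eta\in\Hom_{\mathfrak U(\cQ)}(\widetilde\cK^{\,1},Z(\calL))$ satisfies $o=d_2(\eta)$, not $d_3(\eta)=o$, and since $\eta$ takes values in $Z(\calL)$ it corrects $\beta$ rather than $\tilde\alpha$, on which it acts trivially through $\operatorname{ad}$.
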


\begin{proof}  Assume that an extension as in \eqref{extalg} exists, inducing the given morphism $\alpha$. Write $\cQ$ as the quotient of a free algebroid $\cF$, and lift the morphism $\cE\to\cQ$ to $\cF$, obtaining a commutative diagram
$$\xymatrix{
0 \ar[r] & \cT \ar[r]\ar[d]^\beta &  \cF \ar[r]\ar[d]^\gamma & \cQ \ar[r]\ar@{=}[d] & 0 \\
0  \ar[r] & \calL  \ar[r] & \cE  \ar[r] & \cQ  \ar[r] & 0 }
$$
where $\beta$ is the induced morphism. Define $\tilde\alpha\colon\cF\to \Dder(\calL,\calL)$ by letting
$\tilde\alpha =-\operatorname{ad}\circ\,\gamma$. Then $\tilde\alpha$ is a lift of $\alpha$, 
and for all sections $t$ of $\cT$ and $x$ of $\cF$ one has
\begin{equation}\label{vanishes} \beta(\{x,t\} ) -  \tilde\alpha(x)(\beta(t)) = 0 \end{equation}
so that the obstruction class $\mathbf{ob}(\alpha)$ vanishes.

Conversely, assume that $\mathbf{ob}(\alpha)=0$, and take a lift $\tilde\alpha\colon \cF\to\Dder(\calL,\calL)$.
The corresponding cocycle lies in the image of the morphism $d_2$ in \eqref{resUL},
so it defines a morphism $\beta\colon\cT \to \calL$, which satisfies the equation \eqref{vanishes}. 
Again, we consider the extension 
$$ 0 \to \cT \to \cF \to \cQ \to 0.$$
Note that $\calL$ is an $\cF$-module via $\cF\to\cQ$. The semidirect product $\calL \rtimes \cF$ contains the Lie algebra bundle
$$ \mathcal H = \{(\ell,x)\,\vert\, x\in \cT, \ \ell = \beta(x) \}.$$
The quotient $\cE =  \calL \rtimes \cF /  \mathcal H $ provides the desired extension.
\end{proof}

\subsection{Classifying extensions}
We assume now that the obstruction class $\mathbf{ob}(\alpha)=0$ is zero,
so that the set $\Ext_{\mbox{\tiny LA}}(\cQ,\calL)$ of equivalence classes of extensions of $\cQ$ by $\calL$ is not empty.
We want to show that $\Ext_{\mbox{\tiny LA}}(\cQ,\calL)$  is a torsor on the group $ \mathbb H^2(\cQ;Z(\calL))^{(1)}$.
The idea is to  reduce the problem to the abelian case. We shall be inspired by the treatment in \cite{Mori-ext} for the case
of Lie algebras (actually, this is in turn an adaption to the case of Lie algebras of what was done by Eilenberg and Maclane for
groups \cite{eilenberg-maclane-II}, and the Eilenberg-Maclane paper is much easier to read).

In particular, we shall prove the following result. For clarity, for every morphism $\alpha\colon\cQ\to\Out(\calL)$ we denote
by $\alpha_0$ the induced morphism $\alpha_0\colon\cQ\to \Dder(Z(\calL))$.

\begin{prop} The equivalence classes of extensions of $\cQ$ by $\calL$ inducing $\alpha$ are in a one-to-one correspondence
with equivalence classes of extensions of $\cQ$ by $Z(\calL)$ inducing $\alpha_0$, and are therefore in a one-to-one correspondence
with  the elements of the group $\mathbb H^2(\cQ;Z(\calL))^{(1)}$. 
 \label{abext}
\end{prop}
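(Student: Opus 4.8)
\textbf{Proof plan for Proposition \ref{abext}.}

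The plan is to set up a map in each direction between the set $\Ext_{\mbox{\tiny LA}}(\cQ,\calL)_\alpha$ of equivalence classes of extensions of $\cQ$ by $\calL$ inducing $\alpha$, and the set $\Ext_{\mbox{\tiny LA}}(\cQ,Z(\calL))_{\alpha_0}$ of equivalence classes of extensions of $\cQ$ by the abelian Lie algebra bundle $Z(\calL)$ inducing $\alpha_0$, and then to show these maps are mutually inverse. Combined with the abelian classification recalled in Section \ref{abelian} (Theorem \ref{class} in the abelian case, i.e.\ the identification $\Ext_{\mbox{\tiny LA}}(\cQ,Z(\calL))_{\alpha_0}\simeq\mathbb H^2(\cQ;Z(\calL))^{(1)}$ via Proposition \ref{Der&Trunc}), this gives the statement. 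Since $\mathbf{ob}(\alpha)=0$, both sides are nonempty, and we fix a ``base'' extension $\cE_0$ of $\cQ$ by $\calL$ inducing $\alpha$; the strategy will be to compare an arbitrary extension to $\cE_0$.

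First I would construct the forward map. Given an extension $0\to\calL\to\cE\to\cQ\to 0$ inducing $\alpha$, the subsheaf $Z(\calL)\subset\calL$ is an ideal of $\cE$ (it is $\cE$-stable because the induced action of $\cE$ on $Z(\calL)$ factors through $\cQ$ and $\alpha$ is fixed, and $\calL$ acts trivially on its own centre), so one can form $\cE/Z(\calL)$. This does not yet give an extension of $\cQ$ by $Z(\calL)$; rather one has $0\to\calL/Z(\calL)\to\cE/Z(\calL)\to\cQ\to 0$. To get an extension by $Z(\calL)$ one instead pushes out or, following Mori and Eilenberg--Maclane, compares two extensions: given $\cE$ and the fixed $\cE_0$, one forms the ``difference'' $\cE - \cE_0$ as follows. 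Take the fibre product $\cE\times_\cQ\cE_0$; inside it the antidiagonal copy of $\calL$ together with the congruence that identifies the two copies of $\calL$ via the identity yields, after quotienting, an extension of $\cQ$ whose kernel is $\calL/[\calL,\calL]$-type data --- more precisely, since both extensions induce the \emph{same} $\alpha$, the Baer-type difference lands naturally in an extension of $\cQ$ by $Z(\calL)$. Concretely: the map $\cE\times_\cQ\cE_0\to\calL$, $(e,e_0)\mapsto$ ``$e-e_0$ read in $\calL$'' is well-defined modulo $Z(\calL)$ only, but the obstruction to lifting it is killed precisely by the equality of the outer actions, so one obtains a well-defined class. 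I would phrase this carefully by choosing local set-theoretic sections $s,s_0\colon\cQ\to\cE,\cE_0$ and checking that $s-s_0$, a priori valued in $\calL$, has bracket-defect and cocycle-defect taking values in $Z(\calL)$, exactly as in \cite{Mori-ext,eilenberg-maclane-II}; this produces an element of $\mathbb H^2(\cQ;Z(\calL))^{(1)}$, equivalently an extension of $\cQ$ by $Z(\calL)$ inducing $\alpha_0$.

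For the reverse map, given an extension $0\to Z(\calL)\to\bar\cE\to\cQ\to 0$ inducing $\alpha_0$, I would ``add'' it to the fixed base $\cE_0$: form $\bar\cE\times_\cQ\cE_0$ and quotient by the antidiagonal $\{(z,-z)\mid z\in Z(\calL)\}$, which is an ideal; the result is an extension of $\cQ$ by $\calL$ inducing $\alpha$ (the bracket on $\cE_0$ supplies the nonabelian part, the bracket on $\bar\cE$ only shifts the $Z(\calL)$-component). Then I would check the two constructions are mutually inverse --- this reduces to the identity ``$(\cE - \cE_0) + \cE_0 = \cE$'' and its mirror, which is a congruence of extensions verified by an explicit isomorphism on fibre products, and to the fact that changing the base point $\cE_0$ only translates by a fixed class, hence does not affect the bijection. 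Finally, invoking the abelian case (Section \ref{abelian}), $\Ext_{\mbox{\tiny LA}}(\cQ,Z(\calL))_{\alpha_0}$ is in natural bijection with $\mathbb H^2(\cQ;Z(\calL))^{(1)}$, which completes the proof.

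The main obstacle I expect is the bookkeeping in the forward map: showing that the Baer difference of two \emph{nonabelian} extensions with the same outer action genuinely lands in the \emph{abelian} theory for $Z(\calL)$, i.e.\ that all the ambiguities (choices of set-theoretic sections, of the lift $\tilde\alpha$, of local trivialisations) wash out and that the bracket- and associativity-defects take values in the centre rather than in all of $\calL$. This is where the hypothesis that the two extensions induce the same $\alpha$ (not merely the same $\alpha_0$) is used in an essential way, and where care is needed to stay within the sheaf-theoretic/hypercohomological framework of the paper rather than the pointwise Lie-algebra arguments of \cite{Mori-ext}; one routes everything through the truncated complex and Proposition \ref{Der&Trunc} so that the output is manifestly a class in $\mathbb H^2(\cQ;Z(\calL))^{(1)}$.
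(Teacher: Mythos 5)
Your proposal is correct and follows essentially the same route as the paper: fix a base extension $\cE_0$, use the fibre product over $\cQ$ modulo the antidiagonal copy of $Z(\calL)$ (the paper's $\star$ operation) as the ``addition'', and obtain the ``difference'' by comparing local splittings and checking that, because both extensions induce the same $\alpha$, the defects $\phi'_{ij}-\phi_{ij}$ and $\rho'_i-\rho_i$ take values in $Z(\calL)$ --- which is exactly the content of the paper's Propositions \ref{prop1} and \ref{prop2} before invoking the abelian classification.
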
 

To prove Proposition \ref{abext} we need to develop some machinery.
Let $\cC_1$, $\cC_2$ be two Lie algebroids, and assume there are two surjective morphism $f_i\colon \cC_i\to\cQ$. The fibre product
$\cC_1\times_\cQ\cC_2$ has a natural structure of Lie algebroid. Assuming that $\ker f_1$ and $\ker f_2$ have
  isomorphic centres, which we denote  $\cZ$, we define a product   $\cC_1\star\cC_2$ by letting
$$ \cC_1\star\cC_2 = \cC_1\times_\cQ\cC_2/\cZ,$$
where $\cZ$ is mapped to $\cC_1\times_\cQ\cC_2$  as $z \mapsto (z,-z)$. 

Moreover, we shall consider pairs $(\cK,\alpha)$, where $\cK$ is a bundle of $\cO_X$-Lie algebras on $X$, 
whose centre is isomorphic to a fixed bundle of abelian $\cO_X$-Lie algebras $\cZ$, and $\alpha$ is a morphism $\cQ\to\Out(\cK)$.
 We assume that  the obstruction class $\mathbf{ob}(\alpha_0)$ vanishes, so that  for every pair $(\cK,\alpha)$ there are extensions 
$$ 0 \to \cK \to \cE \to \cQ \to 0$$
such that the induced morphism $\cQ\to \Out(\cK)$ coincides with $\alpha$ --- i.e., every pair $(\cK,\alpha)$ is extendible.

Given two pairs $(\cK',\alpha')$, $(\cK'',\alpha'')$, we define their product
$$ (\cK',\alpha') \star (\cK'',\alpha'') = (\cK'\oplus\cK''/\cZ, \alpha'\star\alpha'')$$
where $\cZ$ is embedded as above, and  $\alpha'\star\alpha''\colon\cQ\to \Out(\cK'\oplus\cK')$ is the sum of
$\alpha'$ and $\alpha''$, which acts on the image of $\cZ$ in $\cK'\oplus\cK''$ as the  derivation $(\alpha_0,-\alpha_0)$.
It is easy to check that if $\cE'$ and $\cE''$ are extensions of $\cQ$ by $(\cK',\alpha')$, $(\cK'',\alpha'')$, respectively,
then $\cE'\star\cE''$ is an extension of $\cQ$ by $ (\cK',\alpha') \star (\cK'',\alpha'')$. Since this is compatible with
equivalence, in particular we have a map
\begin{equation}\label{mapext}
\Ext^1(\cQ,\cK') \times \Ext^1(\cQ,\cK'') \to \Ext^1(\cQ,\cK'\star\cK'').
\end{equation}

Note that a derivation of a Lie algebra bundle always restricts to a derivation of its centre.

\begin{lemma} Given pairs $(\cK,\alpha)$ and $(\cZ,\beta)$, where $\beta$ is the restriction of $\alpha$ to $\cZ$,
one has $(\cK,\alpha)\star(\cZ,\beta)\simeq (\cK,\alpha)$.
\end{lemma}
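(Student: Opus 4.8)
The claim is that tensoring a pair $(\cK,\alpha)$ with the ``central'' pair $(\cZ,\beta)$ — where $\beta=\alpha_{\vert\cZ}$ — returns $(\cK,\alpha)$ up to isomorphism of pairs. The plan is to write down an explicit isomorphism of bundles of $\cO_X$-Lie algebras $\varphi\colon\cK\to(\cK\oplus\cZ)/\cZ$ and check that it intertwines $\alpha$ with $\alpha\star\beta$ as morphisms $\cQ\to\Out(-)$. First I would set up notation: here $\cK\star\cZ=(\cK\oplus\cZ)/\cZ$, where the second $\cZ$ is embedded diagonally-with-sign as $z\mapsto(z,-z)$, the first copy of $\cZ$ being the centre $Z(\cK)\subseteq\cK$ and the second being the stand-alone $\cZ$. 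The natural candidate is the composite $\cK\hookrightarrow\cK\oplus\cZ\twoheadrightarrow(\cK\oplus\cZ)/\cZ$, $\ell\mapsto[(\ell,0)]$.

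The key steps, in order: (i) \emph{$\varphi$ is an isomorphism of $\cO_X$-modules.} Injectivity: if $[(\ell,0)]=0$ then $(\ell,0)=(z,-z)$ for some section $z$ of $\cZ$, forcing $z=0$ and $\ell=0$. Surjectivity: any class $[(\ell,z)]$ equals $[(\ell,z)+(z,-z)]=[(\ell+z,0)]=\varphi(\ell+z)$, using that $z$, viewed in $Z(\cK)$, can be added to $\ell$ inside $\cK$. (ii) \emph{$\varphi$ is a morphism of Lie algebra bundles.} The bracket on $\cK\oplus\cZ$ is the direct-sum bracket (the $\cZ$ factor abelian, no cross terms), so $\{(\ell,0),(\ell',0)\}=(\{\ell,\ell'\},0)$, and this is preserved on passing to the quotient; hence $\varphi(\{\ell,\ell'\})=\{\varphi(\ell),\varphi(\ell')\}$. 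Combined with (i) this makes $\varphi$ an isomorphism of bundles of $\cO_X$-Lie algebras, so in particular it identifies centres, $Z(\cK)\iso Z(\cK\star\cZ)$, compatibly with the fixed identification of both with $\cZ$. (iii) \emph{$\varphi$ intertwines the $\Out$-valued morphisms.} By definition $\alpha\star\beta\colon\cQ\to\Out(\cK\oplus\cZ)$ is $\alpha\oplus\beta$ on $\cK\oplus\cZ$, which descends to the quotient precisely because $\alpha$ and $\beta$ agree on $\cZ$ (this is why the hypothesis $\beta=\alpha_{\vert\cZ}$ is needed: the diagonal-with-sign $\cZ$ is $(\alpha\oplus\beta)$-stable and the induced action on it is $(\alpha_0,-\alpha_0)$, matching the recipe in the definition of $\star$ of pairs). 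Under $\varphi$, a derivation $D$ of $\cK$ lifting $\alpha(x)$ corresponds to $D\oplus\beta(x)$ on $\cK\oplus\cZ$, which is a lift of $(\alpha\star\beta)(x)$; since $\varphi$ is a Lie algebra isomorphism it sends inner derivations to inner derivations, so the induced maps to $\Out$ agree. Therefore $\varphi$ is an isomorphism of pairs $(\cK,\alpha)\iso(\cK,\alpha)\star(\cZ,\beta)$.

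The only mild subtlety — and the step I would write out most carefully — is (iii): one must check that $\varphi$ transports not merely the Lie-algebra structure but the whole datum of a morphism to the Lie algebroid $\Out(-)$, i.e.\ that the square relating $\alpha$, $\alpha\star\beta$, $\Out(\cK)$ and $\Out(\cK\star\cZ)$ commutes, and that the anchors match (they do trivially, both being $0$ since $\cK$ and $\cZ$ are bundles of Lie algebras with vanishing anchor). Everything else is a short diagram chase of the type the excerpt elsewhere dispatches as ``brute force,'' and there are no cohomological obstructions to worry about because the statement is purely algebraic. I would also remark that this lemma is exactly the unit axiom needed to make $\star$ a well-defined operation on extendible pairs with fixed centre, which is what powers the reduction of Proposition~\ref{abext} to the abelian case.
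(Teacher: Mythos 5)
Your proposal is correct and is exactly the ``direct computation'' that the paper leaves to the reader: the map $\ell\mapsto[(\ell,0)]$ is the intended isomorphism, and your checks of bijectivity, bracket-preservation, and compatibility of the $\Out$-valued morphisms (using that any lift of $\alpha(x)$ restricts to $\alpha_0(x)$ on the centre, so the diagonal-with-sign copy of $\cZ$ is stable) are the ones required. Nothing further is needed.
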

\begin{proof} Direct computation.
\end{proof}

\begin{remark} In this case, the map \eqref{mapext} becomes 
$$\Ext^1(\cQ,\cK) \times \Ext^1(\cQ,\cZ) \to \Ext^1(\cQ,\cK)$$
(i.e., it is the Baer sum, see e.g.~\cite{Hilton-Stammbach}),
and on representing cocycles it is expressed by the sum of cocycles.
\end{remark}

We now fix a reference point in $\Ext_{\mbox{\tiny LA}}(\cQ,\calL)$, that is, we fix an extension $\cE$ of $\cQ$ by $\calL$.
The following two Lemmas provide a proof of Proposition \ref{abext}. 

\begin{prop} Any extension  $\cE'$ of $\cQ$ by $\calL$  is equivalent to a product  $\cE\star\calD$ of $\cE$ 
by an extension $\calD$  of $\cQ$ by $Z(\calL)$.\label{prop1}
\end{prop}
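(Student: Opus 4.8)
\textbf{Proof proposal for Proposition \ref{prop1}.} The plan is to extract from the two extensions $\cE$ and $\cE'$ a single extension $\calD$ of $\cQ$ by $Z(\calL)$ that records the ``difference'' between them, and then verify that $\cE\star\calD$ is equivalent to $\cE'$. The key point making this possible is that $\cE$ and $\cE'$ induce the \emph{same} morphism $\alpha\colon\cQ\to\Out(\calL)$, so locally they differ only by an inner derivation, and globally the discrepancy is measured by something valued in the centre $Z(\calL)$. Concretely, I would first form the fibre product $\cE\times_\cQ\cE'$, which is a Lie algebroid surjecting onto $\cQ$; its kernel is $\calL\oplus\calL$ (the two copies of $\calL$ sitting in $\cE$ and $\cE'$ respectively over the zero section of $\cQ$). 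Inside this kernel sits the ``antidiagonal'' copy of $Z(\calL)$, namely $\{(z,-z)\mid z\in Z(\calL)\}$, which is a \emph{central} ideal of $\cE\times_\cQ\cE'$ precisely because both extensions induce the same $\alpha_0$ on $Z(\calL)$. This is exactly the $\star$-construction set up just before the statement.

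Next I would identify the candidate extension $\calD$. The natural guess is to take the Lie subalgebroid $\calD\subseteq\cE\times_\cQ\cE'$ consisting of pairs $(e,e')$ that project to the \emph{same} element of $\cQ$ and whose images under the two anchors agree in an appropriate sense, and quotient by the antidiagonal $Z(\calL)$; alternatively, and perhaps more cleanly, one forms the pushout of $\calL\oplus\calL \hookleftarrow \calL$ (antidiagonal) along the difference map to $Z(\calL)$. The cleanest route is probably: the map $\calL\to\calL\oplus\calL$, $\ell\mapsto(\ell,\ell)$, embeds $\calL$ diagonally, and $\calD$ is defined so that $\cE\star\calD$ reassembles $\cE'$. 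I would then check that $\calD$ is an extension of $\cQ$ by $Z(\calL)$: the anchor of $\calD$ is inherited from $\cQ$, the kernel is $Z(\calL)$, and surjectivity onto $\cQ$ follows from surjectivity of $f_1,f_2$. One must verify that the induced morphism $\cQ\to\Out(Z(\calL))=\Dder(Z(\calL))$ is $\alpha_0$, which is forced since both $\cE,\cE'$ induce $\alpha$ hence $\alpha_0$, and the antidiagonal $Z(\calL)$ is stable.

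Then comes the verification that $\cE\star\calD\simeq\cE'$ as extensions. By construction $\cE\star\calD = (\cE\times_\cQ\calD)/\cZ$ where $\cZ=Z(\calL)$ is embedded antidiagonally; I would produce an explicit morphism $\cE\star\calD\to\cE'$ of Lie algebroids lying over $\id_\cQ$ and restricting to $\id_\calL$ on the kernel, then invoke the five lemma (in the Lie-algebroid category, where it holds because the underlying statement is about $\cO_X$-modules plus a compatible bracket and anchor) to conclude it is an isomorphism of extensions. The map is dictated by the universal property of the fibre product and the quotient; checking it respects brackets and anchors is the routine part.

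\textbf{Main obstacle.} I expect the genuinely delicate step to be the \emph{well-definedness} of $\calD$ and of the equivalence $\cE\star\calD\simeq\cE'$ — specifically, showing that all the relevant subsheaves (the antidiagonal $Z(\calL)$, the image of $\calD$, etc.) really are Lie \emph{ideals} with the claimed anchors, so that the quotient Lie algebroids exist and have coherent $\cO_X$-module structure. This is where the hypothesis that $\cE$ and $\cE'$ induce the \emph{same} $\alpha$ (not merely the same $\alpha_0$) is essential: it is what guarantees that, after choosing local splittings, the two local brackets on $\calL\oplus\calL$ differ by something central, so the antidiagonal copy of $Z(\calL)$ is an ideal and the $\star$-quotient is again a Lie algebroid. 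Everything else — surjectivity onto $\cQ$, identification of kernels, compatibility with equivalence of extensions, the five-lemma argument — should be a diagram chase of the kind the paper elsewhere dispatches with ``brute force diagram chasing.''
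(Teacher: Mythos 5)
Your overall strategy (extract a ``difference'' extension $\calD$ of $\cQ$ by $Z(\calL)$ and show $\cE\star\calD\simeq\cE'$) is the right one, and your intuition that the discrepancy between two extensions inducing the same $\alpha$ is central-valued is correct. But there is a genuine gap: you never actually construct $\calD$, and the candidates you float do not work for nonabelian $\calL$. ``$\calD$ is defined so that $\cE\star\calD$ reassembles $\cE'$'' is the conclusion, not a definition. The antidiagonal copy of $\calL$ in $\calL\oplus\calL$ is not an ideal of $\cE\times_\cQ\cE'$ --- it is not even a Lie subalgebra, since $\{(\ell,-\ell),(m,-m)\}=(\{\ell,m\},\{\ell,m\})$ lands in the diagonal --- so neither the ``pushout along the antidiagonal'' nor a quotient by it exists. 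Quotienting $\cE\times_\cQ\cE'$ by the antidiagonal $Z(\calL)$ is legitimate, but it produces $\cE\star\cE'$, an extension of $\cQ$ by $\calL\oplus\calL/\cZ$, not by $Z(\calL)$. Finally, a global Lie subalgebroid of $\cE\times_\cQ\cE'$ surjecting onto $\cQ$ with kernel exactly the antidiagonal $Z(\calL)$ does not exist canonically: producing one is essentially equivalent to producing the equivalence $\cE\star\calD\simeq\cE'$ you are trying to prove, so this route is circular.

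The paper instead works locally. One chooses an affine cover trivializing everything and local splittings $s_i$, $s'_i$, and encodes each extension by a triple of \v Cech cochains $(\{\alpha_i\},\{\rho_i\},\{\phi_{ij}\})$ recording the local lift of $\alpha$ to $\Dder(\calL)$, the local curvature $\rho_i(x,y)=\{s_i(x),s_i(y)\}-s_i(\{x,y\})$, and the gluing $\phi_{ij}=s_i-s_j$. The crucial normalization step --- which your sketch gestures at but never performs --- is to use the hypothesis that $\cE$ and $\cE'$ induce the same $\alpha\colon\cQ\to\Out(\calL)$ to adjust the splittings so that the local lifts literally coincide, $\alpha'_i=\alpha_i$. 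Only after this do the differences $\psi_{ij}=\phi'_{ij}-\phi_{ij}$ and $\sigma_i=\rho'_i-\rho_i$ take values in $Z(\calL)$ (because both $\rho_i$ and $\rho'_i$ measure the failure of the \emph{same} $\alpha_i$ to be a morphism, via $[\alpha_i(x),\alpha_i(y)]-\alpha_i(\{x,y\})=\operatorname{ad}\rho_i(x,y)$), and the triple $(\{\alpha_i\},\{\sigma_i\},\{\psi_{ij}\})$ assembles into the extension $\calD$ of $\cQ$ by $Z(\calL)$ with $\cE\star\calD\simeq\cE'$. Your proposal is missing precisely this construction; without it the ``routine diagram chase'' has nothing to chase.
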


\begin{proof} Choosing an open affine covering $\mathfrak U= \{U_i\}$ over which all bundles $\calL$, $\cE$ and $\cQ$ trivialize,
we may fix local splittings $s_i\colon \cQ_{\vert U_i} \to \cE_{\vert U_i}$. Then we can associate with $\cE$ a triple of \v Cech cochains
$$ \{\alpha_i\} \in\check C^0(\mathfrak U,\cQ^\ast\otimes\Dder(\calL,\calL)), \quad \{\rho_i\}\in\check C^0(\mathcal U,\Lambda^2\cQ^\ast\otimes \calL),\quad \{\phi_{ij}\} \in\check C^1(\mathfrak U,\cQ^\ast\otimes\calL)$$
by letting, for all sections $x$, $y$ of $\cQ$ and $\ell$ of $\calL$,
$$\alpha_i(x)(\ell) = \{s_i(x),\ell\},\quad \rho (x,y) = \{s_i(x),s_i(y)\}-s_i(\{x,y\}), \quad \phi_{ij} = {s_i}_{\vert U_i\cap U_j} -
{s_j}_{\vert U_i\cap U_j}.$$
The $\alpha_i$ are local lifts of the morphism $\alpha\colon\cQ\to \Out(Z(\calL))$,
and satisfy the conditions
\begin{equation}\label{alphacond}  [\alpha_i(x),\alpha_i(y)]-\alpha_i(\{x,y\})=  \operatorname{ad}(\rho_i(x,y)),
\qquad \alpha_i-\alpha_j= \operatorname{ad}\phi_{ij}
.\end{equation}
 Moreover,
 $\{\phi_{ij}\} $ is a cocycle  which describes
$\cE$ as an extension of vector bundles. The cochain $ \{\rho_i\}$ is closed under the Lie-Rinehart differential
\begin{equation}\label{LRclosed}  d_{\alpha_i}\rho_i(x,y,z) = \alpha_i(x)(\rho_i(y,z))-\rho_i(\{x,y\},z) + \mbox{cycl.~perm.} = 0
\end{equation}
and describes $\cE(U_i)$ as a Lie-Rinehart extension
of $\cQ(U_i)$ by $\calL(U_i)$.
Finally, these cochains satisfy a compatibility condition for the Lie-Rinehart algebra structures on $U_i\cap U_j$:
$$(\delta\rho)_{ij} =  d_{\alpha_i} \phi_{ij}.$$

 If $(\{\alpha'_i\}, \{\rho'_i\}, \{\phi'_{ij}\})$ is a triple describing the extension $\cE'$, 
 one can modify  $\{\phi'_{ij}\}$ by adding a coboundary so that $\alpha'_i=\alpha_i$. If we define
\begin{equation} \psi_{ij} = \phi'_{ij}-\phi_{ij} \label{cocycle1Z} \end{equation}
then for all sections $x$ of $\cQ$ and $\ell$ of $\calL$ one has
$\{ \psi_{ij}(x),\ell) \}= 0 $,
so that 
\begin{equation}
\{\psi_{ij}\} \in\check C^1(\mathfrak U,\cQ^\ast\otimes Z(\calL)).
\end{equation}
We obtain therefore an extension of vector bundles
\begin{equation} 0 \to Z(\calL) \to \calD \to \cQ \to 0.\label{Dext}\end{equation}
Finally, we define a 2-cochain $\{\sigma_i\}\in\check C^0(\mathfrak U,\Lambda^2\cQ^\ast\otimes\calL)$
\begin{equation}
\sigma_i = \rho'_i-\rho_i .\label{cocycle2Z}
\end{equation}
Since both $\rho$ and $\rho'$ satisfy the first condition in \eqref{alphacond}, 
$\{\sigma_i\}$ takes values in the centre $Z(\calL)$, and moreover it satisfies
the conditions
\begin{equation} d_{\alpha_i}\sigma_i=0,\qquad (\delta\sigma)_{ij} = d_{\alpha_i}\psi_{ij}.\end{equation}
So the triple $( \{\alpha_i\},\, \{\sigma_i\},\, \{\psi_{ij}\}) $ gives $\calD$ a Lie algebroid structure. 

Equations \eqref{cocycle1Z} and     \eqref{cocycle2Z} express the fact 
 that $\cE\star\calD\simeq\cE'$.
\end{proof}

\begin{prop} Given an extensions $\cE$ of  $\cQ$ by $\calL$ and two extensions $\calD_1$, $\calD_2$ of $\cQ$ by $Z(\calL)$, 
the extensions $\cE_1=\cE\star\calD_1$ and $\cE_2=\cE\star\calD_2$ are equivalent if and only if $\calD_1$ and $\calD_2$ are equivalent.
\label{prop2}
\end{prop}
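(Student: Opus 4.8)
The plan is to work entirely at the level of the \v Cech cochains introduced in the proof of Proposition~\ref{prop1}, since that proof already sets up a dictionary between extensions and triples $(\{\alpha_i\},\{\rho_i\},\{\phi_{ij}\})$, and expresses the $\star$-operation on cocycles as addition. Concretely, fix the reference extension $\cE$ with associated triple $(\{\alpha_i\},\{\rho_i\},\{\phi_{ij}\})$, and describe $\calD_1$, $\calD_2$ by data $(\{\alpha^0_i\},\{\sigma^{(k)}_i\},\{\psi^{(k)}_{ij}\})$ for $k=1,2$, where $\alpha^0_i$ are local lifts of $\alpha_0\colon\cQ\to\Dder(Z(\calL))$ and $(\{\sigma^{(k)}_i\},\{\psi^{(k)}_{ij}\})$ is a $2$-cocycle in the total complex computing $\mathbb H^2(\cQ;Z(\calL))^{(1)}$. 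By construction of $\star$, the extension $\cE_k=\cE\star\calD_k$ is then described by the triple $(\{\alpha_i\},\{\rho_i+\sigma^{(k)}_i\},\{\phi_{ij}+\psi^{(k)}_{ij}\})$ (after the usual adjustment making all lifts agree on the nose).

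First I would prove the ``if'' direction: if $\calD_1\simeq\calD_2$ then $(\{\sigma^{(1)}_i\},\{\psi^{(1)}_{ij}\})$ and $(\{\sigma^{(2)}_i\},\{\psi^{(2)}_{ij}\})$ differ by a coboundary in the truncated total complex, i.e.\ there is a $1$-cochain $\{\tau_i\}\in\check C^0(\mathfrak U,\cQ^\ast\otimes Z(\calL))$ with $\sigma^{(2)}_i-\sigma^{(1)}_i=d_{\alpha_i}\tau_i$ and $\psi^{(2)}_{ij}-\psi^{(1)}_{ij}=(\delta\tau)_{ij}$. One then checks that modifying the local splittings $s_i\colon\cQ_{\vert U_i}\to\cE_{1\vert U_i}$ by $s_i\mapsto s_i+\tau_i$ (viewing $\tau_i$ as valued in $Z(\calL)\subset\calL$, hence in $\cE_1$) carries the triple for $\cE_1$ to the triple for $\cE_2$; since the two triples then literally coincide, the resulting isomorphism $\cE_1\to\cE_2$ is compatible with the structure maps, so $\cE_1\simeq\cE_2$.

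For the ``only if'' direction, suppose $F\colon\cE_1\to\cE_2$ is an equivalence, i.e.\ it restricts to the identity on $\calL$ and covers the identity on $\cQ$. Choosing compatible local splittings, $F$ is determined on each $U_i$ by a map $\cQ_{\vert U_i}\to\calL_{\vert U_i}$, i.e.\ a $0$-cochain $\{\lambda_i\}\in\check C^0(\mathfrak U,\cQ^\ast\otimes\calL)$, and the conditions that $F$ be a Lie-algebroid morphism respecting $\calL$ translate exactly into the statement that the triples for $\cE_1$ and $\cE_2$ differ by the coboundary of $\{\lambda_i\}$ together with a gauge adjustment. The key point is that, because both $\cE_k$ induce on $Z(\calL)$ the same action $\alpha_0$ and on $\Out(\calL)$ the same $\alpha$, the ambiguity $\{\lambda_i\}$ can be taken with values in the centre $Z(\calL)$: the non-central part of $\lambda_i$ is forced to vanish by the condition $\alpha^{(1)}_i=\alpha^{(2)}_i=\alpha_i$ on lifts, exactly as in the passage from $\{\phi'_{ij}\}$ to $\{\psi_{ij}\}$ in the proof of Proposition~\ref{prop1}. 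Hence $\{\lambda_i\}\in\check C^0(\mathfrak U,\cQ^\ast\otimes Z(\calL))$ provides a coboundary showing $(\{\sigma^{(1)}_i\},\{\psi^{(1)}_{ij}\})\sim(\{\sigma^{(2)}_i\},\{\psi^{(2)}_{ij}\})$, i.e.\ $\calD_1\simeq\calD_2$.

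The main obstacle I expect is the bookkeeping in the ``only if'' direction: one must verify carefully that the non-central component of the gauge $\{\lambda_i\}$ really is rigid, i.e.\ that an equivalence $\cE_1\to\cE_2$ cannot introduce a nontrivial adjoint part without altering the induced map to $\Out(\calL)$ or to $\Dder(Z(\calL))$. This is where the hypothesis that both extensions induce the \emph{same} $\alpha$ (not merely the same $\alpha_0$) is used, and it is the step where one must be most careful to distinguish inner from outer derivations. Once that reduction is in place, the rest is the routine identification of the total-complex coboundary relations with the cocycle conditions already written down in Proposition~\ref{prop1}, and Proposition~\ref{abext} follows by combining Proposition~\ref{prop1} (surjectivity of $\calD\mapsto\cE\star\calD$ onto $\Ext_{\mbox{\tiny LA}}(\cQ,\calL)$), this Proposition (injectivity), and the identification $\Ext^1(\cQ,Z(\calL))\simeq\mathbb H^2(\cQ;Z(\calL))^{(1)}$ from the abelian case in Section~\ref{abelian}.
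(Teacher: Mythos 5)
Your proposal is correct and follows essentially the same route as the paper's own proof: both arguments encode the extensions by \v{C}ech triples $(\{\alpha_i\},\{\rho_i\},\{\phi_{ij}\})$, normalize the local lifts so that $\alpha_i^{(1)}=\alpha_i^{(2)}$, observe that the $0$-cochain measuring the equivalence $\cE_1\to\cE_2$ (your $\{\lambda_i\}$, the paper's $\{b_i\}$) is forced to take values in $Z(\calL)$ because the equivalence is the identity on $\calL$, and conclude that the data of $\calD_1$ and $\calD_2$ differ by the total-complex coboundary of that central cochain. The only difference is cosmetic: you spell out the ``if'' direction that the paper dismisses in one line.
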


\begin{proof} If $\calD_1$ and $\calD_2$ are equivalent, then $\cE_1$ and $\cE_2$ are certainly equivalent. Let us prove the converse.
One has a Lie algebroid morphism $f\colon \cE_1\to\cE_2$ such that the diagram
$$\xymatrix{
&& \cE_1 \ar[dr]\ar[dd]^f \\
0 \ar[r] & \calL \ar[ur]\ar[dr] & & \cQ \ar[r] & 0 \\
&& \cE_2 \ar[ru]
}
$$
commutes. If  $\{\phi_{ij}\}$ is a cocycle representing the extension class of $\cE$, and $\{\psi^{(1)}_{ij}\}$ and $\{\psi^{(2)}_{ij}\}$ 
are cocycles representing the extension classes of $\calD_1$ and $\calD_2$ respectively, then, as $\cE_1$ and $\cE_2$ are isomorphic as vector bundles,
$$ \phi_{ij} + \psi^{(1)}_{ij} = \phi_{ij} + \psi^{(2)}_{ij} + \chi_i-\chi_j$$
for some 0-cocycle $\chi$. So   the classes of $\{\psi^{(1)}_{ij}\}$ and $\{\psi^{(2)}_{ij}\}$ in $\Ext^1(\cQ,Z(\calL))$ coincide, i.e, $\calD_1$ and $\calD_2$ 
are equivalent as vector bundle extensions. Then we identify $\calD_1$ and $\calD_2$ 
 as vector bundles. 

We can introduce local splittings $\{s^{1}_i\}$, $\{s^{2}_i\}$ for $\cE_1$ and $\cE_2$ with the corresponding representing triples.
We can again redefine the cocycle (say) $\{\phi^{(2)}_{ij}\}$
so that $\alpha_i^{(1)}=\alpha^{(2)}_i$ (and we shall denote this $\alpha_i$).  We also introduce $\{b_i\}\in \check C^0(\mathfrak U,\cQ^\ast\otimes\calL)$ by letting $f\circ s^{(1)}_i = b_i + s^{(2)}_i$. 
As
$$\alpha_i(x)(\ell) = f ( \alpha_i(x)(\ell)) = \{f(s^{(1)}(x)),\ell\} = \alpha_i(x)(\ell) + \{b_i(x),\ell\},$$
$\{b_i\}$ actually has values in $Z(\calL)$. The Lie algebroids $\calD_1$, $\calD_2$ are represented by the triples
$$(\{\alpha_i\}, \ \{\sigma^{(j)}_i =  \rho ^{(j)}_i -\rho_i\}, \ \{\psi^{(j)}_{ij}\}), \quad j=1,2.$$

Moreover, one has the equalities
\begin{eqnarray} f(\rho^{(1)}_i(x,y)) &=& \rho^{(2)}_i(x,y) + \{s^{(2)}(x),b_i(y)\} - \{s^{(2)}(y),b_i(x)\} - b_i(\{x,y\}) \\
&=& \rho_i(x,y) + \sigma^{(2)}_i(x,y) + (d_{\alpha_i} b_i)(x,y) \\
   f(\rho^{(1)}_i(x,y)) &=&  \rho_i(x,y) + \sigma^{(1)}_i(x,y)\end{eqnarray}
so that 
$$  \sigma^{(2)}_i= \sigma^{(1)}_i - d_{\alpha_i} b_i .$$
Therefore, $\calD_1$ and $\calD_2$ are equivalent.
\end{proof}

\noindent {\em Proof of Proposition \ref{abext}}.  After fixing an extension $\cE_0$, given any  other extension $\cE$ we can realize
it as $\cE_0\star\calD$; the extension $\calD$  gives an element in $\mathbb H^2(\cQ;Z(\calL))^{(1)}$, which as a consequence of Proposition \ref{prop2}
only depends on the equivalence class of $\cE$.  The resulting map $\Ext_{\mbox{\tiny LA}}(\cQ,\calL)\to\mathbb H^2(\cQ;Z(\calL))^{(1)}$
is bijective because it is so in the abelian case. \qed

\medskip
It is clear that  we have proved that  $\Ext_{\mbox{\tiny LA}}(\cQ,\calL)$ is a torsor over $\mathbb H^2(\cQ;Z(\calL))^{(1)}$.
This completes the proof of Theorem \ref{class} in the nonabelian case.

\bigskip
\frenchspacing

\def\cprime{$'$} \def\cprime{$'$} \def\cprime{$'$} \def\cprime{$'$}

\end{document}